\renewcommand{\epsilon}{\varepsilon}
\renewcommand{\theta}[0]{\vartheta}
\renewcommand{\phi}[0]{\varphi}
\newcommand{\gammabar}[0]{\bar{\gamma}}
\newcommand{\Z}{\mathbb{Z}}
\newcommand{\Hc}[0]{\mathcal{H}}
\newcommand{\Size}[1]{\left\lvert #1 \right\rvert}
\newcommand{\Span}[1]{\left\langle\, #1 \,\right\rangle}
\newcommand{\Set}[1]{\left\{ #1 \right\}}
\newcommand{\norm}[0]{\trianglelefteq}
\DeclareMathOperator{\End}{End}
\DeclareMathOperator{\Aut}{Aut}
\DeclareMathOperator{\Frat}{Frat}
\DeclareMathOperator{\Hol}{Hol}
\DeclareMathOperator{\inv}{inv}
\newtheorem{dummy}{Dummy}
\numberwithin{dummy}{section}
\numberwithin{figure}{section}
\newtheorem{theorem}[dummy]{Theorem}
\newtheorem{remark}[dummy]{Remark}
\newtheorem{lemma}[dummy]{Lemma}
\newtheorem{proposition}[dummy]{Proposition}
\newtheorem{prop}[dummy]{Proposition}
\newtheorem{corollary}[dummy]{Corollary} 
\theoremstyle{definition}
\newtheorem{definition}[dummy]{Definition}
\newtheorem{example}[dummy]{Example}
\theoremstyle{remark}
\numberwithin{equation}{section}
\begin{document}

\date{17 July 2020, 10:57 CEST --- Version 2.08%
}

\title[Bi-Skew Braces]
      {Bi-Skew Braces and\\
        Regular Subgroups of the holomorph}
      
\author{A. Caranti}

\address[A.~Caranti]%
 {Dipartimento di Matematica\\
  Universit\`a degli Studi di Trento\\
  via Sommarive 14\\
  I-38123 Trento\\
  Italy} 

\email{andrea.caranti@unitn.it} 

\urladdr{http://science.unitn.it/$\sim$caranti/}

\subjclass[2010]{20B35 16W30}

\keywords{holomorph, multiple holomorph, skew braces, bi-skew braces,
  regular subgroups, automorphisms}  

\begin{abstract}
  L.~Childs has defined a skew brace $(G, \cdot, \circ)$ to be a bi-skew
  brace if $(G, \circ, \cdot)$ is also a skew brace, and has given
  applications of this concept to the equivalent theory of Hopf-Galois
  structures. 

  The goal of this paper is to deal with bi-skew braces
  $(G, \cdot, \circ)$ from the
  yet equivalent point of view of regular subgroups of the holomorph
  of $(G, \cdot)$. In particular, we find that certain groups studied by
  T.~Kohl, F.~Dalla Volta and the author, and C.~Tsang all yield
  examples of bi-skew braces.

  Building on a construction of Childs, we also give various methods for
  exhibiting further examples of bi-skew braces.
\end{abstract}

\thanks{The author is a member of INdAM---GNSAGA. The author
  gratefully acknowledges support from the Department of Mathematics of
  the University of Trento.}

\maketitle

\thispagestyle{empty}

\section{Introduction}
\label{sec:intro}

Let $G = (G, \cdot)$ be a group. A (right) \emph{skew brace} with
\emph{additive group} $(G, \cdot)$
is a triple $(G, \cdot, \circ)$, where $\circ$ is an operation on $G$
such that $(G, \circ)$ is also a group, and the following axiom holds
\begin{equation}
  \label{eq:axiom}
  (x y) \circ z
  =
  (x \circ z) \cdot z^{-1} \cdot (y \circ z),
  \qquad
  \text{for $x, y, z \in G$.}
\end{equation}
The group $(G,  \circ)$ is called the \emph{circle group}  of the skew
brace.

According  to L.~Childs~\cite{Childs-bi-skew},  such a  skew brace  is
called a  \emph{bi-skew brace} if $(G,  \circ, \cdot)$ is also  a skew
brace.  Given an arbitrary group  $(G, \cdot)$, the trivial skew brace
$(G, \cdot, \cdot)$ is clearly a bi-skew brace. If $G$ is non-abelian,
setting $x  \circ y = y  \cdot x$ we obtain  another (trivial) example
$(G, \cdot, \circ)$ of a bi-skew brace.

It is well known that the specification of a skew brace  with
additive group $(G,  \cdot)$ corresponds to the choice  of a regular
subgroup of  the holomorph  $\Hol(G)$ of $(G,  \cdot)$; we  recall the
details of this correspondence in Section~\ref{sec:preliminaries}. The
two trivial examples of bi-skew braces we have  mentioned in the
previous paragraph
correspond to the images of the right resp.\ left regular
representation of $(G, \cdot)$.

The goal of  this paper is to  discuss the concept of  a bi-skew brace
from the  point of view  of regular  subgroups, and of  the associated
gamma functions, as  defined and used by  F.~Dalla Volta, E.~Campedel,
I.~Del  Corso  and  the  author in~\cite{affine,  fgab,  perfect,  p4,
  CCDC}.    From   this   discussion,   which   we   carry   out   in
Section~\ref{sec:main}, it will follow  among others that the multiple
holomorphs  studied by  T.~Kohl~\cite{Kohl-multi}, F.~Dalla  Volta and
the   author~\cite{fgab,    perfect},   the    author~\cite{p4},   and
C.~Tsang~\cite{Tsang-p} all yield examples of bi-skew braces.

Finally,    building    on    a    construction    of    Childs,    in
Section~\ref{sec:constructions}   we  exhibit   various  methods   for
constructing  more examples  of bi-skew  braces, and  review from  the
point  of  view  of  bi-skew braces  results  of  J.~C.~Ault~and
J.~F.~Watters~\cite{AuWa} about the occurrence  of nilpotent groups of
nilpotence class $2$ as circle groups of skew braces.

\section*{Acknowledgement}

We are  indebted to the  referee for a number  of insightful
comments,   which   have   greatly  contributed   to   improving   the
exposition. In particular, Examples~\ref{ex:ex}, \ref{ex:exexex}, and
\ref{ex:exex} stem 
from suggestions of the referee.

\section{Preliminaries}
\label{sec:preliminaries}

We reprise the following from~\cite{CCDC}.
Let $G = (G, \cdot)$ be a group. There is a one-to-one correspondence
between binary operations $\circ$ on $G$, and maps $\gamma : G \to
G^{G}$, where $G^{G}$ is the set of maps from $G$ to $G$, given by
\begin{equation*}
  x^{\gamma(y)} = (x \circ y) \cdot y^{-1},
  \quad\text{and}\quad
  x \circ y = x^{\gamma(y)} y,
  \qquad\text{for $x, y \in G$.}
\end{equation*}
In Table~\ref{table:1}  we exhibit a correspondence between
certain properties of the operation $\circ$, and certain properties of
the corresponding map $\gamma$.
\begin{table}[H]
  \caption{$\circ$ and $\gamma$}
  \label{table:1}
  \begin{center}
    \begin{tabular}{p{4.50cm}|p{7.00cm}}
      \multicolumn{1}{c|}{Property of $\circ$}
      &
      \multicolumn{1}{c}{Property of $\gamma$}
      \tabularnewline\hline\hline
      \raggedright
      Axiom~\eqref{eq:axiom} holds
      &
      \raggedright
      $\gamma(g) \in \End(G, \cdot)$, for $g \in G$
      \tabularnewline\hline
      $\circ$ is associative
      &
      \raggedright
      $\gamma(x^{\gamma(y)} y) = \gamma(x) \gamma(y)$ holds,
      for $x, y \in G$
      \tabularnewline\hline
      $\circ$ admits inverses
      &
      \raggedright
      $\gamma(g)$ is bijective, for $g \in G$
    \end{tabular}
  \end{center}
\end{table}
The properties  on the  first line are  equivalent. The
properties on  the second  line are  equivalent, under  the assumption
that the properties on the first line hold.  On
the third line, the property on  the right implies the property on the
left, while to prove the  left-to-right implication one need to assume
the  property on  the right  in the  second line.
The  fact that  $(G, \circ)$  has the  same identity  as $(G,  \cdot)$
follows from the properties in the first line.

Taken  together, the  properties in  the left  column state  that $(G,
\cdot, \circ)$ is a (right) skew brace. A function $\gamma$ satisfying
the properties in the right column is called a gamma function, or a GF
for short. We will actually need the following
\begin{definition}[\protect{\cite[Definition 2.1]{CCDC}}]
  \label{def:GF}
  Let $G$ be a group, $A \le G$, and $\gamma: A \to \Aut(G)$ a
  function.

  $\gamma$ is said to satisfy the \emph{gamma functional
    equation} (or \emph{GFE} for short) if
  \begin{equation*}
    \gamma( g^{\gamma(h)} h )
    =
    \gamma(g) \gamma(h),
    \qquad
    \text{for all $g, h \in A$.}
  \end{equation*}
  
  $\gamma$ is said to be a \emph{relative gamma function} (or
  \emph{RGF} for short) on $A$ if it satisfies the gamma functional
  equation, and $A$ is $\gamma(A)$-invariant.

  If $A = G$, a relative gamma function is simply called a
  \emph{gamma function} (or \emph{GF} for short) on $G$.
\end{definition}
In   Section~\ref{sec:main}  we   will   be   using  gamma   functions
only; relative gamma  functions will be used for  two examples in
Section~\ref{sec:constructions}.

\begin{remark}
 \label{rem:ker}
  Note that, referring to Table~\ref{table:1}, a GF $\gamma$  on $G$ is a
  homomorphism  of  groups  $\gamma  : (G,  \circ)  \to  \Aut(G, \cdot)$.  In
  particular $\ker(\gamma)  = \Set{ g  \in G : \gamma(g)  = 1 }$  is a
  normal subgroup of  $(G, \circ)$, but in general only  a subgroup of
  $(G, \cdot)$.
\end{remark}

Recall that the right regular representation is the homomorphism
\begin{align*}
  \rho :\ &G \to S(G)\\
          &g \mapsto (x \mapsto x g),
\end{align*}
where  $S(G)$  is the  group  of  permutations  on  the set  $G$.  The
(permutational) holomorph of  a group $G$ is the  normaliser in $S(G)$
of the image of the right regular representation $\rho$ of $G$,
\begin{equation*}
  \Hol(G)
  =
  N_{S(G)} (\rho(G))
  =
  \Aut(G) \rho(G),
\end{equation*}
where the latter is a semidirect product.

Let  $N$  be  a  regular  subgroup  of  $\Hol(G)$,  that  is,  $N$  is
transitive, and the one-point stabilisers  are trivial. The map $N \to
G$ which takes $n  \in N$ to the image $1^{n}$ of $1  \in G$ under $n$
is thus a bijection. Let $\nu : G \to N$ be its inverse. Thus $\nu(g)$
is the unique  element of $N$ such  that $1^{\nu(g)} = g$,  for all $g
\in  G$. Since  $N  \le \Hol(G)$,  we can  write  $\nu(g) =  \gamma(g)
\rho(g)$, for  a suitable  function $\gamma  : G  \to \Aut(G)$.  It is
immediate  to  see  that  $\gamma$  is  a  gamma  function,  and  that
conversely every gamma function $\gamma$ determines a regular subgroup
$N$  of $\Hol(G)$  as $N  = \Set{\gamma(g)  \rho(g) :  g \in  G}$.  It
follows  that  the  data  of  Table~\ref{table:1}  are  equivalent  to
specifying a regular  subgroup $N$ of the holomorph  $\Hol(G)$, as the
set of maps
\begin{equation*}
  N
  =
  \Set{ x \mapsto x \circ y : y \in G }
  =
  \Set{ \gamma(y) \rho(y) : y \in G},
\end{equation*}
as for $x, y \in G$ one has $x^{\gamma(y) \rho(y)} = x^{\gamma(y)} y =
x \circ y$.
For such an $N$, the map
\begin{align*}
  \nu :\ &(G, \circ) \to N\\
         &y \mapsto (x \mapsto x \circ y)
\end{align*}
is an isomorphism of groups.

For the details, see the discussions in~\cite{perfect, CCDC}.

\begin{remark}
  \label{rem:one-to-three}
  In the rest of the paper, we  will freely use the fact that, given a
  group $(G,  \cdot)$, any of  the following data  uniquely determines
  one of the others
  \begin{enumerate}
  \item an operation  $\circ$ on $G$ such that $(G,  \cdot, \circ)$ is a
    skew brace,
  \item a regular subgroup $N \le \Hol(G)$, and
  \item a gamma function $\gamma : G \to \Aut(G)$.
  \end{enumerate}
\end{remark}

We also recall from~\cite[Proposition 2.21]{CCDC} that, given a
regular subgroup $N \le 
\Hol(G)$ with associated GF $\gamma$, the GF associated to the regular
subgroup $N^{\inv} \le \Hol(G)$ is
\begin{equation*}
  \gammabar(y)
  =
  \gamma(y^{-1}) \rho(y^{-1}).
\end{equation*}
Here $\inv \in S(G)$ is the inverse map $x \mapsto x^{-1}$ on
$G$, which normalises $\Hol(G)$, and thus acts by conjugacy on the set
of regular subgroups of $\Hol(G)$. This construction is equivalent to
the opposite skew brace 
construction of~\cite{opposite}.

\section{Regular subgroups of the holomorph, and gamma functions}
\label{sec:main}

\subsection{Regular subgroups}

A skew brace $(G, \cdot, \circ)$ is said to be a bi-skew brace if $(G,
\circ, \cdot)$  is also  a skew brace~\cite{Childs-bi-skew}.

We reinterpret this definition in terms of regular subgroups and gamma
functions. Let
\begin{enumerate}
\item
  $\gamma$ be the GF associated to $(G, \cdot, \circ)$, that is
  \begin{equation*}
    x \circ y = x^{\gamma(y)} y,
    \text{ for $x, y \in G$,}
  \end{equation*}
  and
\item 
  $\gamma'$ the GF associated to $(G, \circ, \cdot)$, that is
  \begin{equation*}
    x y = x^{\gamma'(y)} \circ y,
    \text{ for $x, y \in G$.}
  \end{equation*}
\end{enumerate}
Therefore, for $x, y \in G$, we have
\begin{equation*}
  x y
  =
  x^{\gamma'(y)} \circ y
  =
  x^{\gamma'(y) \gamma(y)} y,
\end{equation*}
so that for $y \in G$ we have
\begin{equation}
  \label{eq:delta-is-inverse-of-gamma}
  \gamma'(y) = \gamma(y)^{-1}.
\end{equation}
This  implies,  as  per  Remark~\ref{rem:ker},  that  $\ker(\gamma)  =
\ker(\gamma')$ is normal in both $(G, \cdot)$ and $(G, \circ)$.

We now state the following theorem, in which we use the
convention of Remark~\ref{rem:one-to-three}, and write
\begin{align*}
  \iota :\ &G \to \Aut(G)\\
           &y \mapsto (x \mapsto y^{-1} x y)
\end{align*}
for the map taking $y \in G$ to the inner automorphism of $G$ it induces.

\begin{theorem}
  \label{eq:bi-skew-via-gamma}
  Let $G = (G, \cdot)$ be a group.

  The following data are equivalent: 
  \begin{enumerate}
  \item
    \label{item:bi-skew}
    A bi-skew brace $(G, \cdot, \circ)$;
  \item
    \label{item:regular}
    a  regular subgroup  $N$ of $\Hol(G)$  which is  normalised by
    $\rho(G)$;
  \item
    \label{item:anti}
    a GF $\gamma : G  \to \Aut(G)$ which satisfies, 
    \begin{equation*}
      \gamma(x y) = \gamma(y) \gamma(x),
      \text{ for $x, y  \in G$,} 
    \end{equation*}
    that is, $\gamma$ is an
    anti-homomorphism;
  \item
    \label{item:equivariant}
    a GF $\gamma : G  \to \Aut(G)$ which satisfies
    \begin{equation*}
      \gamma(x^{\gamma(y)}) = \gamma(x)^{\gamma(y)},
      \text{ for $x, y \in G$;}
    \end{equation*}
  \item
    \label{item:no-GF}
    a function $\gamma : G  \to \Aut(G)$ which satisfies,  for $x, y \in G$,
    \begin{equation*}
      \begin{cases}
        \gamma(x y) = \gamma(y) \gamma(x),
        \\
        \gamma(x^{\gamma(y)}) = \gamma(x)^{\gamma(y)};
      \end{cases}
    \end{equation*}
  \item
    \label{item:norm0}
    a GF $\gamma : G  \to \Aut(G)$ which satisfies
    \begin{equation*}
      \gamma( [G, \gammabar(G)] ) = 1;
    \end{equation*}
  \item 
    \label{item:norm1}
    a GF $\gamma : G  \to \Aut(G)$ which satisfies
    \begin{equation*}
      \gamma(x^{-1} y^{-1} x^{\gamma(y)} y) = 1,
      \text{ for $x, y  \in G$;}
    \end{equation*}
  \end{enumerate}
\end{theorem}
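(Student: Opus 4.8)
The plan is to organise the seven items into three clusters --- the skew-brace/holomorph conditions \ref{item:bi-skew}--\ref{item:regular}, the gamma-function identities \ref{item:anti}--\ref{item:no-GF}, and the kernel/commutator conditions \ref{item:norm0}--\ref{item:norm1} --- prove the equivalences \emph{inside} each cluster, and then bridge them. First I would dispose of \ref{item:bi-skew}$\Leftrightarrow$\ref{item:regular} through the holomorph dictionary of Section~\ref{sec:preliminaries}. The brace $(G,\circ,\cdot)$ is a skew brace precisely when its additive group $(G,\circ)$ carries a regular subgroup of $\Hol(G,\circ)=N_{S(G)}(N)$ whose associated operation is $\cdot$; that regular subgroup is $\rho(G)$. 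Since $\rho(G)$ is \emph{automatically} a regular subgroup of $S(G)$, the only content is that $\rho(G)\le N_{S(G)}(N)$, i.e.\ that $\rho(G)$ normalises $N$. Together with the fact that $(G,\cdot,\circ)$ is a skew brace iff $N$ is regular in $\Hol(G)$, this gives \ref{item:bi-skew}$\Leftrightarrow$\ref{item:regular}.

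Next I would settle the gamma-function cluster by pure manipulation of the gamma functional equation $\gamma(x^{\gamma(y)}y)=\gamma(x)\gamma(y)$. For a GF, \ref{item:anti}$\Leftrightarrow$\ref{item:equivariant}: assuming the anti-homomorphism identity, $\gamma(x^{\gamma(y)}y)=\gamma(y)\gamma(x^{\gamma(y)})$, and comparing with the GFE yields $\gamma(x^{\gamma(y)})=\gamma(y)^{-1}\gamma(x)\gamma(y)=\gamma(x)^{\gamma(y)}$; the reverse substitution (replacing $x$ by $x^{\gamma(y)^{-1}}$ in the GFE and using equivariance) recovers $\gamma(xy)=\gamma(y)\gamma(x)$. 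For \ref{item:no-GF} the point is that one need not \emph{assume} $\gamma$ is a GF: feeding the equivariance identity into the anti-homomorphism identity applied to the product $x^{\gamma(y)}\cdot y$ gives $\gamma(x^{\gamma(y)}y)=\gamma(y)\cdot\gamma(y)^{-1}\gamma(x)\gamma(y)=\gamma(x)\gamma(y)$, which is exactly the GFE; hence \ref{item:no-GF} forces $\gamma$ to be a GF and is therefore equivalent to the conjunction of \ref{item:anti} and \ref{item:equivariant}.

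The conceptual heart of the proof is the bridge \ref{item:bi-skew}$\Leftrightarrow$\ref{item:anti}\,\&\,\ref{item:equivariant}, which I would do directly via the gamma function $\gamma'$ of $(G,\circ,\cdot)$. By \eqref{eq:delta-is-inverse-of-gamma} we have $\gamma'(y)=\gamma(y)^{-1}$, so $(G,\cdot,\circ)$ is bi-skew exactly when this $\gamma'$ is a genuine GF for the additive group $(G,\circ)$, that is, when $\gamma'(y)\in\Aut(G,\circ)$ for all $y$ \emph{and} $\gamma'$ satisfies the $\circ$-version of the GFE. Each of these unwinds cleanly. Using $(a\circ b)^{\gamma(y)}=a^{\gamma(b)\gamma(y)}b^{\gamma(y)}$ versus $a^{\gamma(y)}\circ b^{\gamma(y)}=a^{\gamma(y)\gamma(b^{\gamma(y)})}b^{\gamma(y)}$, the requirement $\gamma(y)\in\Aut(G,\circ)$ is equivalent to $\gamma(b^{\gamma(y)})=\gamma(b)^{\gamma(y)}$, i.e.\ to \ref{item:equivariant}. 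For the $\circ$-GFE, the identity $x^{\gamma(y)^{-1}}\circ y=xy$ turns $\gamma'(x^{\gamma'(y)}\circ y)=\gamma'(x)\gamma'(y)$ into $\gamma(xy)^{-1}=\gamma(x)^{-1}\gamma(y)^{-1}$, i.e.\ into the anti-homomorphism identity \ref{item:anti}. Thus bi-skew $\Leftrightarrow$ \ref{item:equivariant}\,\&\,\ref{item:anti}, and combined with the previous paragraph all of \ref{item:bi-skew}--\ref{item:no-GF} (and \ref{item:regular}) are equivalent.

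It remains to fold in the kernel/commutator conditions. Here \ref{item:norm0}$\Leftrightarrow$\ref{item:norm1} is essentially unwinding a definition: expanding $\gammabar$ from the formula of Proposition~2.21 and computing the commutators $[\rho(x),\gammabar(y^{-1})]$ inside $\Hol(G)$ shows that the subgroup $[G,\gammabar(G)]$ is generated (as a subgroup of $(G,\cdot)$) by the elements $x^{-1}y^{-1}x^{\gamma(y)}y$, so that $\gamma([G,\gammabar(G)])=1$ says precisely that $\gamma$ kills each such element. The easy inclusion into the web is \ref{item:anti}\,\&\,\ref{item:equivariant}$\Rightarrow$\ref{item:norm1}: applying the anti-homomorphism rule to reverse the word $x^{-1}y^{-1}x^{\gamma(y)}y$ and then the equivariance rule to $\gamma(x^{\gamma(y)})$ collapses the product to $\gamma(x)\gamma(x)^{-1}=1$. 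The main obstacle is the return direction, since a naive peeling of \ref{item:norm1} by the GFE does not close: I would therefore route it through \ref{item:regular}, showing that the normaliser condition ``$\rho(G)$ normalises $N$'' is equivalent to \ref{item:norm1}. Conjugating $\nu(y)=\gamma(y)\rho(y)$ by $\rho(g)$ produces the map $x\mapsto x^{\gamma(y)}(g^{\gamma(y)})^{-1}yg$, which lies in $N$ iff $\gamma\big((g^{\gamma(y)})^{-1}yg\big)=\gamma(y)$; the technical crux is to reconcile this ``$\gamma(\cdot)=\gamma(y)$'' form with the ``$\gamma(\cdot)=1$'' form of \ref{item:norm1} using the homomorphism property of $\gamma$ on $(G,\circ)$ and the normality of $\ker(\gamma)$. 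Once \ref{item:regular}$\Leftrightarrow$\ref{item:norm1} is in hand, the whole list is equivalent.
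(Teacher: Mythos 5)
Your proposal is correct, and it closes the circle of equivalences along a genuinely different route from the paper's. The parts you share with the paper are the gamma-function cluster (the equivalence of \eqref{item:anti}, \eqref{item:equivariant}, \eqref{item:no-GF}), the bridge from \eqref{item:bi-skew} to \eqref{item:anti}\,\&\,\eqref{item:equivariant} via $\gamma'(y)=\gamma(y)^{-1}$, and the implication \eqref{item:anti}\,\&\,\eqref{item:equivariant}$\Rightarrow$\eqref{item:norm1}; these match the paper's computations almost line by line. You differ in two places. First, you prove \eqref{item:bi-skew}$\Leftrightarrow$\eqref{item:regular} directly and conceptually, from $\Hol(G,\circ)=N_{S(G)}(N)$ together with the remark that $(G,\circ,\cdot)$ is a skew brace exactly when the (automatically regular) subgroup $\rho(G)$ lies in this normaliser; the paper never proves this equivalence directly, reaching \eqref{item:regular} only through \eqref{item:norm1} via the commutator identity $[\rho(x),\gamma(y)\rho(y)]=\rho(x^{-1}y^{-1}x^{\gamma(y)}y)$. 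Second, for the return from \eqref{item:norm1} the paper stays inside gamma-function calculus, proving \eqref{item:norm1}$\Rightarrow$\eqref{item:anti} by writing $x^{\gamma(y)}y=yxk$ with $k\in\ker\gamma$ and applying the GFE, whereas you route through \eqref{item:regular}. Here you overestimate the difficulty: given your other equivalences, the only direction you need is \eqref{item:norm1}$\Rightarrow$\eqref{item:regular}, and it is immediate rather than a ``technical crux''. Indeed, if $w=g^{-1}y^{-1}g^{\gamma(y)}y\in\ker\gamma$, then also $w^{-1}\in\ker\gamma$ (for $k\in\ker\gamma$ the $\circ$-inverse of $k$ equals $k^{-1}$, and $\gamma$ is a homomorphism on $(G,\circ)$), so the GFE gives $\gamma(yw^{-1})=\gamma\bigl(y^{\gamma(w^{-1})^{-1}}\bigr)\gamma(w^{-1})=\gamma(y)$, which is exactly your membership criterion for $\rho(g)^{-1}\nu(y)\rho(g)\in N$, since $(g^{\gamma(y)})^{-1}yg=yw^{-1}$. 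The genuinely delicate direction \eqref{item:regular}$\Rightarrow$\eqref{item:norm1} is not needed in your scheme; if you want it anyway, the clean device is the paper's: multiply the conjugate by $\nu(y)^{-1}$ first, so membership in $N$ is tested on $[\nu(y),\rho(g)]=\rho(w^{-1})$, where it reads simply $\gamma(w^{-1})=1$. (Alternatively your named tools do suffice: $\circ$-normality of $\ker\gamma$ makes it invariant under the maps $x\mapsto a^{-1}x^{\gamma(a)}a$, and applying this with $a=y$ to the kernel element $(yw^{-1}y^{-1})^{\gamma(y)^{-1}}$ supplied by your criterion returns $w^{-1}\in\ker\gamma$.) In sum, your decomposition isolates the conceptual content of \eqref{item:bi-skew}$\Leftrightarrow$\eqref{item:regular} --- the two skew-brace structures are literally each other's regular-subgroup data --- while the paper's commutator computation is what makes \eqref{item:norm0} and \eqref{item:norm1} appear naturally and keeps the hard return direction entirely inside gamma-function identities.
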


\begin{definition}
  We will refer to a GF on $G$ which satisfies the conditions of
  Theorem~\ref{eq:bi-skew-via-gamma} as a \emph{bi-GF}.
\end{definition}

\begin{proof}[Proof of Theorem~\ref{eq:bi-skew-via-gamma}]
  To say that $(G, \cdot, \circ)$ is a skew brace is equivalent to
  saying that $\gamma$ maps $G$ to $\Aut(G)$, and that $\gamma$ satisfies
  the GFE
  \begin{equation*}
    \gamma( x^{\gamma(y)} y )
    =
    \gamma(x) \gamma(y),
    \qquad
    \text{for $x, y \in G$.}
  \end{equation*}
  If $(G, \circ, \cdot)$ is also a skew brace, then the values of
  $\gamma'$, and by~\eqref{eq:delta-is-inverse-of-gamma} also those of $\gamma$,
  are in $\Aut(G, \circ)$. Therefore we have, for $x, y, z \in G$,
  \begin{equation*}
    (x \circ y)^{\gamma(z)}
    =
    x^{\gamma(z)} \circ y^{\gamma(z)}
    =
    x^{\gamma(z) \gamma(y^{\gamma(z)})} y^{\gamma(z)}
  \end{equation*}
  and also
  \begin{equation*}
    (x \circ y)^{\gamma(z)}
    =
    (x^{\gamma(y)} y)^{\gamma(z)}
    =
    x^{\gamma(y) \gamma(z)} y^{\gamma(z)}.
  \end{equation*}
  It thus follows that for $y, z \in G$ we have
  \begin{equation}
    \label{eq:weak}
    \gamma(y^{\gamma(z)})
    =
    \gamma(z)^{-1} \gamma(y) \gamma(z)
    =
    \gamma(y)^{\gamma(z)},
  \end{equation}
  where in the last term we have used the notation $u^{v} = v^{-1} u v$
  for the conjugate in a group.
  We have shown that~\eqref{item:bi-skew} implies~\eqref{item:equivariant}.

  \eqref{item:equivariant} implies that $\gamma : G \to \Aut(G)$ is an
  anti-homomorphism, that is, \eqref{item:anti}, as
  \begin{equation}
    \label{eq:anti}
    \gamma(x y)
    =
    \gamma(x^{\gamma(y)^{-1}}) \gamma(y)
    =
    \gamma(x)^{\gamma(y)^{-1}} \gamma(y)
    =
    \gamma(y) \gamma(x).
  \end{equation}
  Conversely, \eqref{item:anti} implies~\eqref{item:equivariant}, so that
  the two conditions are equivalent. In fact,
  assuming~\eqref{item:anti} we have
  \begin{equation*}
    \gamma(x^{\gamma(y)^{-1}})
    =
    \gamma(x y) \gamma(y)^{-1}
    =
    \gamma(y) \gamma(x) \gamma(y)^{-1}
    =
    \gamma(x)^{\gamma(y)^{-1}}.
  \end{equation*}
  (Therefore conditions~\eqref{eq:weak}~and \eqref{eq:anti} are a weaker
  form of the 
  conditions of Theorem~5.2(2) 
  of~\cite{perfect}, see Subsection~\ref{subsec:stronger} below.)

  As~\eqref{item:anti}~and \eqref{item:equivariant} are equivalent, each
  of them 
  implies~\eqref{item:no-GF}. Conversely, if~\eqref{item:no-GF} holds,
  we have, for $x, y \in G$, 
  \begin{equation*}
    \gamma(x^{\gamma(y)} y)
    =
    \gamma(y) \gamma(x^{\gamma(y)})
    =
    \gamma(y) \gamma(x)^{\gamma(y)}
    =
    \gamma(x) \gamma(y),
  \end{equation*}
  that is, $\gamma$ is a GF, so that~\eqref{item:anti}~and
  \eqref{item:equivariant} hold.

  If     $\gamma$
  satisfies~\eqref{item:equivariant},  and thus  also~\eqref{item:anti},
  then the map $\gamma' : y \mapsto (x \mapsto x^{\gamma(y)^{-1}})$ is a
  GF on $(G, \circ)$, whose associated operation is ``$\cdot$''. In fact
  we have, for $x, y, z \in G$,
  \begin{align*}
    (x \circ y)^{\gamma(z)^{-1}}
    &=
    (x^{\gamma(y)} y)^{\gamma(z)^{-1}}
    \\&=
    x^{\gamma(y) \gamma(z)^{-1}} y^{\gamma(z)^{-1}}
    \\&=
    (x^{\gamma(z)^{-1}})^{\gamma(y^{\gamma(z)^{-1}})} y^{\gamma(z)^{-1}}
    \\&=
    x^{\gamma(z)^{-1}} \circ y^{\gamma(z)^{-1}},
  \end{align*}
  so that $\gamma(z)^{-1} \in \Aut(G, \circ)$. Also,
  \begin{equation*}
    x^{\gamma(y)^{-1}} \circ y
    =
    x^{\gamma(y)^{-1} \gamma(y)}  y
    =
    x \cdot y,
  \end{equation*}
  and finally
  \begin{align*}
    \gamma( x^{\gamma(y)^{-1}} \circ y )^{-1}
    &=
    \gamma( x y )^{-1}
    \\&=
    (\gamma(y) \gamma(x))^{-1}
    \\&=
    \gamma(x)^{-1} \gamma(y)^{-1},
  \end{align*}
  that is, $\gamma'$ is a GF on $(G, \circ)$. This shows that either
  of~\eqref{item:equivariant}~or \eqref{item:anti}
  implies~\eqref{item:bi-skew}. 

  Let
  \begin{equation*}
    N
    =
    \Set{
      \gamma(x) \rho(x)
      :
      x \in G
    }
  \end{equation*}
  be the regular subgroup of $\Hol(G)$ associated to the skew brace $(G,
  \cdot, \circ)$ and the GF $\gamma$.
  For $x, y \in G$ we have
  \begin{align*}
    [\rho(x), \gamma(y) \rho(y)]
    &=
    \rho(x)^{-1} \rho(y)^{-1} \gamma(y)^{-1} \rho(x) \gamma(y) \rho(y)
    \\&=
    \rho(x^{-1} y^{-1} x^{\gamma(y)} y).
  \end{align*}
  Thus $N$ is normalised by $\rho(G)$ if and only if
  \begin{equation*}
    \gamma(x^{-1} y^{-1} x^{\gamma(y)} y) = 1.
  \end{equation*}
  Therefore~\eqref{item:regular}~and \eqref{item:norm1} are equivalent.

  \eqref{item:norm0} is a restatement of~\eqref{item:norm1}, as
  \begin{equation*}
    x^{-1} y^{-1} x^{\gamma(y)} y
    =
    x^{-1} x^{\gamma(y) \iota(y)}
    =
    [x, \gammabar(y^{-1})].
  \end{equation*}

  Assuming~\eqref{item:anti}~and \eqref{item:equivariant},
  we get~\eqref{item:norm1}, as
  for $x, y \in G$ we have
  \begin{align*}
    \gamma(x^{-1} y^{-1} x^{\gamma(y)} y)
    =
    \gamma(y) \gamma(x)^{\gamma(y)} \gamma(y)^{-1} \gamma(x)^{-1}
    =
    1
  \end{align*}

  We conclude the proof by showing that~\eqref{item:norm1}
  implies~\eqref{item:anti}. Property~\eqref{item:norm1} can be
  rewritten as
  \begin{equation*}
    \gamma( (y x)^{-1} (x^{\gamma(y)} y) ) = 1
  \end{equation*}
  for all $x, y \in G$, that is, there is $k \in \ker(\gamma)$ such
  that
  \begin{equation*}
    x^{\gamma(y)} y = y x k,
  \end{equation*}
  so that we have, for all $x, y \in G$,
  \begin{equation*}
    \gamma(x) \gamma(y) = \gamma(x^{\gamma(y)} y) = \gamma((y x) k)
    = \gamma((y x)^{\gamma(k)^{-1}}) \gamma(k) = \gamma(y x),
  \end{equation*}
  that is, \eqref{item:anti} holds.
\end{proof}
The equivalence of~\eqref{item:anti}, \eqref{item:equivariant}~and
\eqref{item:no-GF} can be stated as the following
\begin{lemma}
  Let $G$ be a group, and $\gamma : G \to \Aut(G)$ a function.

  Any two of the following conditions imply the third.
  \begin{enumerate}
  \item $\gamma$ is a GF on $G$,
  \item $\gamma$ is an anti-homomorphism, and
  \item $\gamma(x^{\gamma(y)}) = \gamma(x)^{\gamma(y)}$, for $x, y \in G$.
  \end{enumerate}
\end{lemma}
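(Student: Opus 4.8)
The plan is to observe that this Lemma merely repackages, as three independent pairwise implications, the computations already performed inside the proof of Theorem~\ref{eq:bi-skew-via-gamma}; so I would extract and isolate those three short chains rather than prove anything new. Throughout I would use the convention $u^{v} = v^{-1} u v$ and the fact that each $\gamma(y)$ is a bijection of $G$, which licences the substitution $x \mapsto x^{\gamma(y)^{\pm 1}}$ in any identity that is asserted for all $x$.

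First I would treat the two ``mirror'' implications involving the GF hypothesis. For \textbf{(1) and (3) imply (2)}, substituting $x^{\gamma(y)^{-1}}$ for the first argument in the GFE (so that $x^{\gamma(y)^{-1}\gamma(y)}\,y = xy$) gives $\gamma(xy) = \gamma(x^{\gamma(y)^{-1}})\gamma(y)$; applying the equivariance~(3) and then the conjugation convention yields
\begin{equation*}
  \gamma(x y)
  =
  \gamma(x^{\gamma(y)^{-1}}) \gamma(y)
  =
  \gamma(x)^{\gamma(y)^{-1}} \gamma(y)
  =
  \gamma(y) \gamma(x),
\end{equation*}
which is exactly~\eqref{eq:anti}, the anti-homomorphism property~(2). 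For \textbf{(1) and (2) imply (3)}, the same GFE substitution gives $\gamma(x^{\gamma(y)^{-1}}) = \gamma(xy)\gamma(y)^{-1}$, and feeding in the anti-homomorphism~(2) followed by the convention produces
\begin{equation*}
  \gamma(x^{\gamma(y)^{-1}})
  =
  \gamma(x y) \gamma(y)^{-1}
  =
  \gamma(y) \gamma(x) \gamma(y)^{-1}
  =
  \gamma(x)^{\gamma(y)^{-1}}.
\end{equation*}
Here I would note that this holds for all $x, y$, and that replacing $x$ by $x^{\gamma(y)}$ (legitimate since $\gamma(y)$ is bijective) turns it into $\gamma(x^{\gamma(y)}) = \gamma(x)^{\gamma(y)}$, which is~(3).

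Finally, for \textbf{(2) and (3) imply (1)}, no GF is assumed, so I would start from the left-hand side of the GFE and compute directly:
\begin{equation*}
  \gamma(x^{\gamma(y)} y)
  =
  \gamma(y) \gamma(x^{\gamma(y)})
  =
  \gamma(y) \gamma(x)^{\gamma(y)}
  =
  \gamma(x) \gamma(y),
\end{equation*}
using the anti-homomorphism~(2) for the first equality, the equivariance~(3) for the second, and the convention for the third. This is precisely the gamma functional equation, so $\gamma$ is a GF, establishing~(1).

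I do not expect any genuine obstacle: the content is entirely the bookkeeping of the conjugation convention $u^{v}=v^{-1}uv$ and the validity of the changes of variable $x \mapsto x^{\gamma(y)^{\pm 1}}$, which are justified because every $\gamma(y) \in \Aut(G)$ is a bijection of $G$. The only point requiring a word of care is the passage between the forms $\gamma(x^{\gamma(y)^{-1}}) = \gamma(x)^{\gamma(y)^{-1}}$ and $\gamma(x^{\gamma(y)}) = \gamma(x)^{\gamma(y)}$ in the second implication, which is resolved by exactly that substitution.
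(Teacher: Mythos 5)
Your proposal is correct and takes essentially the same approach as the paper: the paper gives no separate proof of this Lemma, presenting it as a restatement of the equivalence of conditions \eqref{item:anti}, \eqref{item:equivariant} and \eqref{item:no-GF} established inside the proof of Theorem~\ref{eq:bi-skew-via-gamma}, and your three chains of equalities reproduce exactly those computations (your first is \eqref{eq:anti}, your second the displayed converse, your third the verification that \eqref{item:no-GF} yields the GFE). The change of variable $x \mapsto x^{\gamma(y)^{\pm 1}}$ that you flag is the same implicit substitution the paper performs, so nothing is missing.
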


\subsection{Cubes}

In~\cite{affine, FCC} it  is proved that if $(G, +,  0)$ is an abelian
group,  then the  abelian regular  subgroups  $N \le  \Hol(G)$ can  be
described via  the structures  of commutative,  radical rings  $(G, +,
*)$.  In~\cite{affine},  the condition  that $N$  normalises $\rho(G)$
was shown to be equivalent in this context to
\begin{equation}
  \label{eq:cube}
  G * G * G = \Set{ 0 }
\end{equation}
We now show that Theorem~\ref{eq:bi-skew-via-gamma}\eqref{item:norm1}
translates as expected to~\eqref{eq:cube} when $G$ and $N$ are abelian.
Recalling
from~\cite{affine, FCC} that we have
\begin{equation*}
  x^{\gamma(y)}
  =
  x + x * y,
\end{equation*}
the additive version of
Theorem~\ref{eq:bi-skew-via-gamma}\eqref{item:norm1} reads
\begin{align*}
  0
  &=
  x^{\gamma(y^{\gamma(z)} - y)} - x
  \\&=
  x^{\gamma(y * z)} - x  
  \\&=
  x * y * z
\end{align*}
for $x, y, z \in G$, that is, \eqref{eq:cube} holds.

\subsection{A stronger condition}
\label{subsec:stronger}

In~\cite{fgab,  perfect, p4}  the  regular subgroups  $N$ of  $\Hol(G)$
which  are  normal in  $\Hol(G)  =  \Aut(G)  \rho(G)$ were  studied.
Clearly      this      is      a     stronger      condition      than
Theorem~\ref{eq:bi-skew-via-gamma}\eqref{item:regular}.  We  also note
that  regular  subgroups $N  \norm  \Hol(G)$  correspond to  the  GF's
$\gamma$ on $G$ which satisfy
\begin{equation*}
  \gamma(x^{\beta})
  =
  \gamma(x)^{\beta}
  \text{ for $x \in G$, $\beta \in \Aut(G)$,}
\end{equation*}
which is again a stronger condition than
Theorem~\ref{eq:bi-skew-via-gamma}\eqref{item:equivariant}.

We obtain
the following. 
\begin{theorem}
  \label{thm:beta}
  Let $G = (G, \cdot)$ be a group.

  The following data are equivalent: 
  \begin{enumerate}
  \item
    \label{item:auto-is-auto}
    A skew brace $(G, \cdot, \circ)$, such that $\Aut(G, \cdot)
    \le \Aut(G, \circ)$;
  \item
    \label{item:unique-iso}
    a skew brace $(G, \cdot, \circ)$ which is unique of its
    isomorphism type among skew braces with additive group $(G, \cdot)$;
  \item
    \label{item:N-normal}
    a  regular subgroup  $N \norm \Hol(G)$;
    \item
    \label{item:N-normal2}
    a  regular subgroup  $N \le \Hol(G)$ which is normalised by $\Aut(G)$;
  \item
    \label{item:beta1}
    a GF $\gamma : G \to \Aut(G)$ which satisfies
    \begin{equation*}
      \gamma(x^{\beta})
      =
      \gamma(x)^{\beta}
      \text{ for $x \in G$, $\beta \in \Aut(G)$;}
    \end{equation*}
   \item
    \label{item:beta2}
    a function $\gamma : G \to \Aut(G)$ which satisfies
    \begin{equation*}
      \begin{cases}
        \gamma(x y) = \gamma(y) \gamma(x),
        \text{ for $x, y \in G$, and}
        \\
        \gamma(x^{\beta})
        =
        \gamma(x)^{\beta}
        \text{ for $x \in G$, $\beta \in \Aut(G)$.}
      \end{cases}
    \end{equation*}
  \end{enumerate}
  The skew braces appearing in the statement are all bi-skew
  braces.
\end{theorem}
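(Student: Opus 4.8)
The plan is to route every equivalence through the gamma function condition~\eqref{item:beta1}, using it as a hub, and to reduce the ``bi-skew'' content to Theorem~\ref{eq:bi-skew-via-gamma}. First I would record how conjugation in $\Hol(G) = \Aut(G)\rho(G)$ acts on the regular subgroup $N = \Set{\gamma(x)\rho(x) : x \in G}$. Since $\rho(x)^{\beta} = \rho(x^{\beta})$, a direct computation gives, for $\beta \in \Aut(G)$,
\begin{equation*}
  (\gamma(x)\rho(x))^{\beta} = \gamma(x)^{\beta}\rho(x^{\beta}).
\end{equation*}
By uniqueness of the $\Aut(G)\rho(G)$ factorisation, $N$ is normalised by $\Aut(G)$ exactly when $\gamma(x^{\beta}) = \gamma(x)^{\beta}$ for all $x \in G$ and $\beta \in \Aut(G)$, which is \eqref{item:N-normal2} $\Leftrightarrow$ \eqref{item:beta1}. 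The equivalence \eqref{item:beta1} $\Leftrightarrow$ \eqref{item:beta2} is then immediate from the Lemma following Theorem~\ref{eq:bi-skew-via-gamma}: condition~\eqref{item:beta1} specialised to $\beta = \gamma(y)$ yields $\gamma(x^{\gamma(y)}) = \gamma(x)^{\gamma(y)}$, so that among ``GF'', ``anti-homomorphism'' and this equivariance, \eqref{item:beta1} supplies the first and third while \eqref{item:beta2} supplies the second and third, and in each case the Lemma produces the missing property.

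Next I would handle \eqref{item:auto-is-auto} $\Leftrightarrow$ \eqref{item:beta1} by a short calculation: for $\beta \in \Aut(G, \cdot)$, using $x \circ y = x^{\gamma(y)} y$,
\begin{equation*}
  (x \circ y)^{\beta} = x^{\gamma(y)\beta} y^{\beta},
  \qquad
  x^{\beta} \circ y^{\beta} = x^{\beta\gamma(y^{\beta})} y^{\beta},
\end{equation*}
so $\beta \in \Aut(G, \circ)$ if and only if $\gamma(y^{\beta}) = \gamma(y)^{\beta}$ for all $y$; letting $\beta$ range over $\Aut(G, \cdot)$ gives exactly \eqref{item:beta1}. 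For \eqref{item:N-normal} $\Leftrightarrow$ \eqref{item:N-normal2}, one direction is trivial, and for the other I would use the key observation that \eqref{item:beta1} specialised to the inner values $\beta = \gamma(y) \in \Aut(G)$ is precisely Theorem~\ref{eq:bi-skew-via-gamma}\eqref{item:equivariant}; hence $\gamma$ is a bi-GF, so by that theorem $N$ is also normalised by $\rho(G)$, whence $N \norm \Aut(G)\rho(G) = \Hol(G)$. The same observation settles the final assertion at once: any GF satisfying \eqref{item:beta1} satisfies Theorem~\ref{eq:bi-skew-via-gamma}\eqref{item:equivariant}, hence is a bi-GF, and so $(G, \cdot, \circ)$ is a bi-skew brace.

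The step I expect to be the main obstacle is the equivalence with \eqref{item:unique-iso}, since it requires translating ``isomorphism of skew braces with additive group $(G, \cdot)$'' into the language of $\Hol(G)$. Here I would show that conjugation of $N$ by $\phi \in \Aut(G)$ produces the GF $z \mapsto \gamma(z^{\phi^{-1}})^{\phi}$, whose associated operation is the transport $x \circ' z = (x^{\phi^{-1}} \circ z^{\phi^{-1}})^{\phi}$; thus $\phi$ is a skew-brace isomorphism $(G, \cdot, \circ) \to (G, \cdot, \circ')$, and conversely every skew brace with additive group $(G, \cdot)$ isomorphic to $(G, \cdot, \circ)$ arises from an $\Aut(G)$-conjugate of $N$. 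Consequently $(G, \cdot, \circ)$ is unique of its isomorphism type if and only if the $\Aut(G)$-conjugacy class of $N$ equals $\Set{N}$, i.e.\ $N$ is normalised by $\Aut(G)$, which is \eqref{item:N-normal2}. The delicate point is verifying that transport-by-$\phi$ coincides exactly with conjugation-by-$\phi$, so that isomorphism classes match conjugacy classes in both directions; once that dictionary is in place, \eqref{item:unique-iso} $\Leftrightarrow$ \eqref{item:N-normal2} closes the cycle of equivalences.
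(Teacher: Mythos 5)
Your proposal is correct, and it takes a genuinely more self-contained route than the paper. The paper's own proof is mostly citation: the equivalence of \eqref{item:auto-is-auto}, \eqref{item:N-normal}, \eqref{item:beta1} and \eqref{item:beta2} is simply quoted from \cite{perfect}; then \eqref{item:N-normal} $\Leftrightarrow$ \eqref{item:N-normal2} is obtained from the factorisation $\Hol(G) = \Aut(G)\rho(G) = \Aut(G)N$ (valid because $N$ is regular and $\Aut(G)$ is the stabiliser of $1$), since a subgroup normalised by $\Aut(G)$ and by itself is normal in $\Aut(G)N$; and \eqref{item:unique-iso} is handled by invoking the known correspondence between isomorphism classes of skew braces with additive group $(G, \cdot)$ and conjugacy classes of regular subgroups of $\Hol(G)$, equivalently their $\Aut(G)$-orbits. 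You instead prove everything directly, with \eqref{item:beta1} as hub, via the conjugation formula $(\gamma(x)\rho(x))^{\beta} = \gamma(x)^{\beta}\rho(x^{\beta})$, the ``two out of three'' Lemma following Theorem~\ref{eq:bi-skew-via-gamma}, and an explicit transport-versus-conjugation dictionary for \eqref{item:unique-iso}; these are exactly the computations the cited source carries out, so nothing is lost in rigour, and your treatment of \eqref{item:unique-iso} even sidesteps the small point the paper's phrasing implicitly needs, namely that $\Hol(G)$-conjugacy of regular subgroups reduces to $\Aut(G)$-conjugacy (write $h = n\alpha$ with $n \in N$, $\alpha \in \Aut(G)$, so that $N^{h} = N^{\alpha}$). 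Two trade-offs are worth noting. For \eqref{item:N-normal2} $\Rightarrow$ \eqref{item:N-normal} the paper's stabiliser argument is shorter than your detour through $\beta$-equivariance specialised at $\beta = \gamma(y)$ and Theorem~\ref{eq:bi-skew-via-gamma}\eqref{item:equivariant}; but your detour pays for itself by simultaneously establishing the final assertion that all these skew braces are bi-skew braces, which the paper instead extracts from the remark preceding the theorem (that $\beta$-equivariance is stronger than condition~\eqref{item:equivariant}). In short, the paper's proof buys brevity; yours buys independence from \cite{perfect}.
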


In a number  of papers, the regular subgroups which  are normal in the
holomorph of a group in certain classes of groups have been described,
thereby providing  examples of  non-trivial bi-skew  braces satisfying
the  conditions of  the Theorem.   In Table~\ref{table:2}  we list  these
papers, briefly describing the corresponding  classes of groups for the
convenience of the reader.
\begin{table}[H]
  \caption{Examples}
  \label{table:2}
  \begin{tabular}{|p{2cm}|p{10cm}|}
    \hline
    \cite{Kohl-multi} & Dihedral and quaternionic groups\\
    \hline
    \cite{fgab} & Finitely generated abelian groups\\
    \hline
    \cite{perfect} & Finite, perfect, centreless groups\\
    \hline
    \cite{p4} & Finite $p$-groups of nilpotence class two\\
    \hline
    \cite{Tsang-p} & Finite groups of squarefree orders, and finite
    $p$-groups of nilpotence class $\null < p$\\
    \hline
  \end{tabular}
\end{table}

\begin{proof}[Proof of Theorem~\ref{thm:beta}]
  \eqref{item:auto-is-auto}, \eqref{item:N-normal}, \eqref{item:beta1}~and
  \eqref{item:beta2} are shown to be 
  equivalent in~\cite{perfect}.

  Since $N$ is regular, we have
  \begin{equation}
    \label{eq:reg-norm}
    \Hol(G) =  \Aut(G) \rho(G) = \Aut(G) N,
  \end{equation}
  as   $\Aut(G)$  is   the  stabiliser   of  $1$   in  $\Hol(G)$,   so
  that~\eqref{item:N-normal} is equivalent to~\eqref{item:N-normal2}.
  
  As  to~\eqref{item:unique-iso},  the  isomorphism  classes  of  skew
  braces $(G,  \cdot, \circ)$ correspond  to the conjugacy  classes of
  regular  subgroups  $N$  of  $\Hol(G)$,  or  equivalently,  according
  to~\eqref{eq:reg-norm}, the orbits of  these regular subgroups under
  the action of $\Aut(G)$.
\end{proof}

We now recall a result of Miller~\cite{Miller-multi}.
\begin{theorem}
  The group
  \begin{equation*}
    T(G)
    =
    N_{S(G)}(\Hol(G)) / \Hol(G)
  \end{equation*}
  acts regularly on the set
  \begin{equation*}
    \Hc(G)
    =
    \Set{ N \le S(G) : \text{$N$ is regular, $N \cong G$ and $N_{S(G)}(N)
        = \Hol(G)$} }.
  \end{equation*}

  In particular, the map
  \begin{align*}
    \mathcal{T} :\ &T(G) \to \Hc(G)\\
                   &\Hol(G) \tau \mapsto \rho(G)^{\tau}.
  \end{align*}
  is well defined and bijective.
\end{theorem}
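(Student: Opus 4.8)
The plan is to recognise the whole statement as an instance of a conjugation action being regular. Throughout I write $X^\tau = \tau^{-1} X \tau$, in keeping with the convention $u^v = v^{-1} u v$ used earlier. The group $N_{S(G)}(\Hol(G))$ acts on the set of regular subgroups of $S(G)$ by conjugation, and I would first check that $\Hc(G)$ is invariant under this action: for $\tau \in N_{S(G)}(\Hol(G))$ and $N \in \Hc(G)$, conjugation preserves regularity and isomorphism type, while $N_{S(G)}(N^\tau) = N_{S(G)}(N)^\tau = \Hol(G)^\tau = \Hol(G)$. Note also that $\rho(G) \in \Hc(G)$, since $\rho(G)$ is regular, isomorphic to $G$, and $N_{S(G)}(\rho(G)) = \Hol(G)$ by the definition of the holomorph.

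Next I would show the action descends to $T(G)$ and is free at $\rho(G)$. The key observation is that $\Hol(G)$ lies in the kernel of the action: for every $N \in \Hc(G)$ one has $\Hol(G) = N_{S(G)}(N)$, so every element of $\Hol(G)$ fixes every point of $\Hc(G)$. Conversely, any $\tau \in N_{S(G)}(\Hol(G))$ fixing $\rho(G)$ lies in $N_{S(G)}(\rho(G)) = \Hol(G)$, so the kernel is exactly $\Hol(G)$ and the stabiliser of $\rho(G)$ in $N_{S(G)}(\Hol(G))$ is exactly $\Hol(G)$. Hence the action factors through $T(G) = N_{S(G)}(\Hol(G))/\Hol(G)$, the stabiliser of $\rho(G)$ in $T(G)$ is trivial, and $\mathcal{T}$ is well defined on cosets and injective; these are the statements that $T(G)$ acts freely and that $\mathcal{T}$ is injective.

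The substantive step is transitivity, equivalently the surjectivity of $\mathcal{T}$. Here I would use the standard fact that two regular subgroups of $S(G)$ that are abstractly isomorphic are conjugate in $S(G)$: given an abstract isomorphism $\phi$ between them, the assignment $1^h \mapsto 1^{\phi(h)}$ is a well-defined bijection of $G$ that realises the conjugation. Applying this to $\rho(G)$ and an arbitrary $N \in \Hc(G)$, both regular and isomorphic to $G$, yields $\sigma \in S(G)$ with $\rho(G)^\sigma = N$. The point that makes the whole mechanism work is that such a $\sigma$ automatically normalises $\Hol(G)$, because $\Hol(G)^\sigma = N_{S(G)}(\rho(G))^\sigma = N_{S(G)}(\rho(G)^\sigma) = N_{S(G)}(N) = \Hol(G)$, the last equality being exactly the defining condition of $\Hc(G)$. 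Thus $\sigma \in N_{S(G)}(\Hol(G))$ and $\mathcal{T}(\Hol(G)\sigma) = N$, proving transitivity.

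I expect the conjugacy lemma for regular subgroups to be the only genuinely substantive ingredient; once it is available, the hypothesis $N_{S(G)}(N) = \Hol(G)$ carries the argument by forcing the conjugating permutation into $N_{S(G)}(\Hol(G))$, and the remainder is orbit--stabiliser bookkeeping. Combining freeness with transitivity shows that $T(G)$ acts regularly on $\Hc(G)$, and the orbit map of a regular action is a bijection, which is the asserted bijectivity of $\mathcal{T}$.
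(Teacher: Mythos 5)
Your proof is correct, but there is nothing in the paper to compare it against: the paper states this result as a recollection of Miller's 1908 theorem \cite{Miller-multi} and gives no proof at all, so the only question is whether your self-contained argument is sound --- and it is. All three ingredients check out: (i) $\Hc(G)$ is invariant under conjugation by $N_{S(G)}(\Hol(G))$ and contains $\rho(G)$; (ii) the stabiliser of any $N \in \Hc(G)$ in $N_{S(G)}(\Hol(G))$ equals $N_{S(G)}(N) \cap N_{S(G)}(\Hol(G)) = \Hol(G)$, which gives both that the action factors through $T(G)$ and that it is free --- one quibble is that you state the ``converse'' stabiliser computation only at the point $\rho(G)$ and then claim freeness of the whole action, but this is harmless, since the identical computation applies at every $N \in \Hc(G)$ (each such $N$ satisfies $N_{S(G)}(N) = \Hol(G)$ by definition), or alternatively freeness at one point plus transitivity already yields regularity; and (iii) transitivity, which is the substantive step, where you correctly combine the standard conjugacy lemma (abstractly isomorphic regular subgroups of $S(G)$ are conjugate, via the well-defined bijection $1^{h} \mapsto 1^{\phi(h)}$) with the forcing argument
\begin{equation*}
  \Hol(G)^{\sigma}
  =
  N_{S(G)}(\rho(G))^{\sigma}
  =
  N_{S(G)}(\rho(G)^{\sigma})
  =
  N_{S(G)}(N)
  =
  \Hol(G).
\end{equation*}
This last chain is exactly where the hypothesis $N_{S(G)}(N) = \Hol(G)$ in the definition of $\Hc(G)$ earns its keep, by pushing the conjugating permutation into $N_{S(G)}(\Hol(G))$; you have identified the crux of Miller's theorem precisely, and the rest is, as you say, orbit--stabiliser bookkeeping.
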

Note that $\rho(G)^{\tau} = \tau^{-1} \rho(G) \tau$ denotes the
conjugate of the subgroup $\rho(G)$ of $S(G)$ by the element $\tau \in S(G)$.

The inversion map  $\inv : x \mapsto x^{-1}$  normalises $\Hol(G)$, as
it centralises $\Aut(G)$ and  satisfies $\rho(G)^{\inv} = \lambda(G)$.
If $G$  is non-abelian, then  $\inv \notin \Aut(G)$, so  that $\Hol(G)
\inv$ is an element of $T(G)$ different from the identity.

Note  that  a regular  subgroup  $N  \in  \Hc(G)$ satisfies  $N  \norm
\Hol(G)$, and thus yields a bi-skew brace. We obtain
\begin{corollary}
  \label{cor:T}
  Let $G = (G, \cdot)$ be a group.

  \begin{enumerate}
  \item
    $\mathcal{T}$ yields an injective map from $T(G)$ to the set of
    isomorphism classes of 
    bi-skew braces of 
    Theorem~\ref{thm:beta}.
  \item 
    In  particular,  
    \begin{enumerate}
    \item
      if $G$ is non-abelian, and $\Size{T(G)} > 2$, or
    \item 
      if $G$ is abelian and $T(G) \ne \Set{1}$,
    \end{enumerate}
    then  there  are non-trivial  examples  of  bi-skew braces  with
    additive group $G$.
  \end{enumerate}
  
  However, 
  \begin{enumerate}
  \item
    \label{item:non-Phi}
    there are non-abelian groups $(G  , \cdot)$ which are the additive
    group of  a bi-skew brace, such  that the latter corresponds  to a
    regular  subgroup  $N$ which  is  not  in  the image  $\Hc(G)$  of
    $\mathcal{T}$, and
  \item
    \label{item-TG1-still}
    there are abelian  groups $(G, \cdot)$ with $T(G)  = \Set{1}$, for
    which  there are  non-trivial bi-skew  braces with  additive group
    $G$.
  \end{enumerate}
\end{corollary}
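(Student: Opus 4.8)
The plan for the first item is to combine Miller's theorem with the conjugacy description of isomorphism classes. By Miller's theorem $\mathcal{T} : T(G) \to \Hc(G)$ is a bijection, and every $N \in \Hc(G)$ satisfies $N_{S(G)}(N) = \Hol(G)$, hence $N \norm \Hol(G)$; by Theorem~\ref{thm:beta} each such $N$ is the regular subgroup of a bi-skew brace of the stated type. I would compose $\mathcal{T}$ with the map sending a regular subgroup to the isomorphism class of its skew brace, which (as recorded in the proof of Theorem~\ref{thm:beta}) is the passage to the $\Hol(G)$-conjugacy class. The only real point is that a normal subgroup is its own conjugacy class: distinct $N, N' \in \Hc(G)$ are each normal in $\Hol(G)$, hence each is a singleton conjugacy class, and so $N \neq N'$ give distinct isomorphism classes. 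Injectivity of the composite is then immediate.

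The two ``in particular'' statements reduce to locating the trivial braces inside the image. For non-abelian $G$ the excerpt records $\inv \notin \Aut(G)$, so $\Hol(G)\inv \neq 1$ in $T(G)$ and $\Size{T(G)} \ge 2$; moreover $\mathcal{T}(1) = \rho(G)$ and $\mathcal{T}(\Hol(G)\inv) = \rho(G)^{\inv} = \lambda(G)$ are the two trivial braces, distinct precisely because $G$ is non-abelian. Hence if $\Size{T(G)} > 2$ a third element of $T(G)$ yields, by injectivity, a third isomorphism class, necessarily a non-trivial bi-skew brace. For abelian $G$ one has $\inv \in \Aut(G)$, so $\Hol(G)\inv = 1$ and $\rho(G) = \lambda(G)$; the single trivial brace is $\mathcal{T}(1)$, and any non-identity element of $T(G)$ again gives a non-trivial bi-skew brace.

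For the two ``however'' assertions I would exhibit explicit examples. For the abelian claim take $G = \Z/4$ with the commutative nilpotent ring product $x * y = 2 x y$: since $G * G * G = \Set{0}$, the Cubes subsection shows the corresponding operation $a \circ b = a + b + 2 a b$ is a non-trivial bi-skew brace, whose circle group is elementary abelian and so not isomorphic to $\Z/4$; thus its $N$ lies outside $\Hc(G)$, and a short computation that the Sylow $2$-subgroup $\Hol(\Z/4) \cong D_{4}$ of $S_{4}$ is self-normalising gives $T(\Z/4) = \Set{1}$. For the non-abelian claim I would take a class-$2$ group with non-abelian automorphism group, e.g.\ $G = Q_{8}$, together with $\gamma = \iota \circ \sigma$ for an anti-automorphism $\sigma$: this is an anti-homomorphism into $\Inn(G)$, and the class-$2$ identity $\iota(g a g^{-1}) = \iota(a)$ makes it satisfy condition~\eqref{item:equivariant}, so by the Lemma following Theorem~\ref{eq:bi-skew-via-gamma} it is a bi-GF. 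Writing $\sigma = \beta_{0} \circ \inv$ and using that $\inv$ induces a central automorphism of the abelian group $G/Z(G)$, the stronger equivariance of Subsection~\ref{subsec:stronger} holds for all $\beta \in \Aut(G)$ iff $\beta_{0}$ acts centrally on $G/Z(G)$; choosing $\beta_{0}$ to act non-centrally---possible since for $Q_{8}$ the group $\Aut(G)$ acts on $G/Z(G) \cong (\Z/2)^{2}$ through the whole non-abelian $S_{3}$---makes $N$ fail to be normal in $\Hol(G)$, whence $N \notin \Hc(G)$.

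The first two items are essentially formal. The main obstacle is the ``however'' part: the effort is in verifying that the candidate data really are bi-skew braces and really escape $\Hc(G)$. For $Q_{8}$ this rests on the genuine use of the class-$2$ hypothesis in the equivariance check and on controlling the action of $\Aut(G)$ on $G/Z(G)$; for $\Z/4$ it rests on the explicit normaliser computation yielding $T(G) = \Set{1}$. Packaging these verifications cleanly, rather than the conceptual skeleton, is where the real work lies.
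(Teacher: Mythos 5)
Your proof is correct, and on the crucial ``however'' items it takes a genuinely different route from the paper's (whose proof is postponed to Subsection~\ref{subs:proof}). On the formal items you and the paper agree: injectivity follows because each $N \in \Hc(G)$ is normal in $\Hol(G)$, hence a one-element conjugacy class, i.e.\ unique of its isomorphism type, as in Theorem~\ref{thm:beta}\eqref{item:unique-iso}. For the non-abelian ``however'' item, the paper's $N$ escapes $\Hc(G)$ because it is not isomorphic to $G$: the paper takes $G$ to be a central product of two isomorphic finite perfect groups with amalgamated centre of order $3$, where Proposition~\ref{prop:central} together with \cite[Proposition~7.9]{perfect} gives $(G, \circ) \not\cong (G, \cdot)$. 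Your $Q_{8}$ example escapes through the other membership requirement: $N \not\norm \Hol(G)$. Your verification is sound: $\gamma = \iota \circ \sigma$ with $\sigma = \beta_{0} \circ \inv$ is an anti-homomorphism satisfying Theorem~\ref{eq:bi-skew-via-gamma}\eqref{item:equivariant}, because conjugate elements of a class-two group induce the same inner automorphism, hence it is a bi-GF by the Lemma following that theorem; and Theorem~\ref{thm:beta}\eqref{item:beta1} fails exactly when the image of $\beta_{0}$ in $\Aut(G/Z(G))$ is non-central in the image of $\Aut(G)$ there, which for $Q_{8}$ is all of $S_{3} \cong \GL(2,2)$. (In fact your $\gamma$ is an instance of Proposition~\ref{thm:Delta}, with $K = Z(G)$ and $\Delta$ a $\beta_{0}$-twisted commutator bi-homomorphism.) This non-normality mechanism is the one the paper uses in Example~\ref{ex:ex} to answer the referee's question, but not in its proof of the corollary; your example is smaller and self-contained, while the paper's perfect-group example establishes the stronger fact that even the isomorphism type of $N$ can differ from that of $G$. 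For the abelian item you and the paper use the same group, cyclic of order $4$; the paper cites \cite[Proposition~4.1(2)]{fgab}, whereas you re-derive everything by hand: the nilpotent ring product $x * y = 2xy$ together with the Cubes subsection produces the non-trivial bi-skew brace (with Klein four circle group), and self-normalization of Sylow $2$-subgroups of $S_{4}$ gives $T(G) = \Set{1}$. Both of those computations check out.
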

We postpone the proof of the last two statements to
Subsection~\ref{subs:proof}.

Note that the bi-skew braces $(G, \cdot, \circ)$ in the image of $\mathcal{T}$
all have $(G, \cdot) \cong (G, \circ)$. The referee has asked
whether the converse holds. In Example~\ref{ex:ex}, we show that
this is not the case.

We first note
the following Lemma, whose proof is immediate, and which will also be
used in
Subsection~\ref{subs:bi-hom}.
\begin{lemma}
  \label{prop:gammaG-is-abelian}
  Let $G$ be a group, and $\gamma$ a GF on $G$. Any two of the
  following conditions imply the third.

  \begin{enumerate}
  \item $\gamma : (G, \cdot) \to \Aut(G, \cdot)$ is a homomorphism,
  \item $\gamma(G)$ is abelian,
  \item $\gamma : (G, \cdot) \to \Aut(G, \cdot)$ is an anti-homomorphism.
  \end{enumerate}
\end{lemma}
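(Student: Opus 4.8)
The plan is to reduce all three conditions to identities relating the three expressions $\gamma(xy)$, $\gamma(x)\gamma(y)$, and $\gamma(y)\gamma(x)$, after which the assertion degenerates into the elementary observation that, among three quantities, the coincidence of any two pairs forces the third. Concretely, condition (1) is precisely the identity $\gamma(xy) = \gamma(x)\gamma(y)$ for all $x, y \in G$, and condition (3) is precisely $\gamma(xy) = \gamma(y)\gamma(x)$ for all $x, y \in G$. The only step that calls for a word of justification is recasting condition (2): I would show that, in the presence of either (1) or (3), the statement that $\gamma(G)$ is abelian is equivalent to the commutation identity $\gamma(x)\gamma(y) = \gamma(y)\gamma(x)$ holding for all $x, y \in G$.

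For that recasting, the first thing to note is that under either (1) or (3) the image $\gamma(G)$ is genuinely a subgroup of $\Aut(G)$, generated by the elements $\gamma(x)$ with $x \in G$. If $\gamma(G)$ is abelian then of course $\gamma(x)$ and $\gamma(y)$ commute for every $x, y$; conversely, a group generated by pairwise commuting elements is itself abelian, so if all the $\gamma(x)$ commute then $\gamma(G)$ is abelian. Hence in each of the three implications to be proved I may freely replace condition (2) by the commutation identity $\gamma(x)\gamma(y) = \gamma(y)\gamma(x)$.

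With these translations in hand, the three implications become immediate by transitivity of equality. Assuming (1) and (2), one reads off $\gamma(xy) = \gamma(x)\gamma(y) = \gamma(y)\gamma(x)$, which is (3). Assuming (2) and (3), one reads off $\gamma(xy) = \gamma(y)\gamma(x) = \gamma(x)\gamma(y)$, which is (1). Assuming (1) and (3), one gets $\gamma(x)\gamma(y) = \gamma(xy) = \gamma(y)\gamma(x)$ for all $x, y$, and since (1) makes $\gamma(G)$ a subgroup generated by these pairwise commuting elements, the image is abelian, which is (2). I do not anticipate any real obstacle here, in keeping with the paper's remark that the proof is immediate; the single point I would take care to spell out rather than leave tacit is the equivalence between ``$\gamma(G)$ is abelian'' and pairwise commutation of the $\gamma(x)$, since it is the only place where the bare set-theoretic image must be recognised as a subgroup.
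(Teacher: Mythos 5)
Your proposal is correct and is precisely the ``immediate'' argument the paper has in mind (the paper omits the proof entirely, declaring it immediate): translate (1) and (3) into the identities $\gamma(xy)=\gamma(x)\gamma(y)$ and $\gamma(xy)=\gamma(y)\gamma(x)$, and observe that any two of the three statements force the third by transitivity of equality. The one point you take care over---that the set $\gamma(G)$ is a subgroup, so that pairwise commutation of the $\gamma(x)$ is the same as $\gamma(G)$ being abelian---is in fact automatic even without (1) or (3), since a GF is a homomorphism $\gamma\colon (G,\circ)\to\Aut(G,\cdot)$ by Remark~\ref{rem:ker}, making $\gamma(G)$ a subgroup of $\Aut(G,\cdot)$ from the outset.
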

The     third     condition     states,     according     to
Theorem~\ref{eq:bi-skew-via-gamma}, that $\gamma$ is a bi-GF.
\begin{example}
  \label{ex:ex}
  Let $p > 2$ and $q$ be primes such that $q \mid p - 1$. Let $G$
  be the non-trivial semidirect product of a
 cyclic group of order $p^{2}$ by a cyclic group of order $q$.

 In~\cite[Proposition~4.5]{CCDC} it  is shown  that there  are regular
 subgroups $N  \le \Hol(G)$ such  that $(G,  \circ) \cong N  \cong (G,
 \cdot)$,   and  $N_{\Hol(G)}(N)$ has index  $p^{2}$ in $\Hol(G)$;   in
 particular, $N \not\norm \Hol(G)$, so that $N$ is not in the image of
 $\mathcal{T}$.   We will  show that  the GF  associated to  these $N$
 satisfy Theorem~\ref{eq:bi-skew-via-gamma}\eqref{item:anti}.

 In~\cite[4.4.1]{CCDC},  the GF  $\gamma$ associated  to these  regular
 subgroups  $N$  are described.   These  are the morphisms  $\gamma : (G,
 \cdot) \to \Aut(G,  \cdot)$ which have the Sylow  $p$-subgroup of
 $G$  as  their  kernel,  and  have  thus  a  cyclic  image  of  order
 $q$.      According       to      Lemma~\ref{prop:gammaG-is-abelian},
 condition~\eqref{item:anti}   of   Theorem~\ref{eq:bi-skew-via-gamma}
 holds, so that $\gamma$ is a bi-GF.
\end{example}

\section{Some constructions}
\label{sec:constructions}

We  now  record  some  constructions  for  bi-skew  braces,  which  in
particular  cover,  and  generalise, the  examples  of~\cite{perfect,  p4,
  Tsang-p}.

\subsection{A result of Childs}

We begin with two generalisations of the following result
of Childs, which we reformulate in our terminology.
\begin{proposition}[\protect{\cite[Proposition 7.1]{Childs-bi-skew}}]
  \label{prop:Childs}
  Let the group $G$ be the semidirect product of $K
  \norm G$ by $H \le G$.

  Then the map
  \begin{align*}
    \gamma :\ &G \to \Aut(G)\\
              &h k \mapsto \iota(h^{-1})
  \end{align*}
  for $h \in H$ and $k \in K$, is a bi-GF on $G$.
\end{proposition}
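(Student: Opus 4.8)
The plan is to exhibit $\gamma$ as a composite of structure maps and then invoke Theorem~\ref{eq:bi-skew-via-gamma}\eqref{item:no-GF}, which guarantees that a function $\gamma : G \to \Aut(G)$ that is simultaneously an anti-homomorphism and satisfies the equivariance relation $\gamma(x^{\gamma(y)}) = \gamma(x)^{\gamma(y)}$ is automatically a GF, hence a bi-GF. So it suffices to check conditions \eqref{item:anti}~and \eqref{item:equivariant} for the given map.

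First I would set up the projection onto the complement. Writing $g = hk$ uniquely with $h \in H$, $k \in K$, the assignment $\pi : g \mapsto h$ is well defined, and I claim it is a group homomorphism $G \to H$: for $g_i = h_i k_i$ one has $g_1 g_2 = h_1 h_2 (h_2^{-1} k_1 h_2) k_2$, and since $K \norm G$ the trailing factor $(h_2^{-1} k_1 h_2) k_2$ lies in $K$, whence $\pi(g_1 g_2) = h_1 h_2 = \pi(g_1)\pi(g_2)$. This normality of $K$ is the one genuinely structural ingredient, and is where the semidirect-product hypothesis is used; in particular $\pi$ restricts to the identity on $H$. With $\pi$ in hand the map in the statement reads $\gamma(g) = \iota(\pi(g)^{-1})$, and since every such value is an inner automorphism we indeed have $\gamma : G \to \Aut(G)$.

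Next I would check the anti-homomorphism property \eqref{item:anti}. With the right-action convention $x^{\iota(y)} = y^{-1} x y$ one verifies that $\iota : G \to \Inn(G)$ is a homomorphism, so $h \mapsto \iota(h^{-1})$ is an anti-homomorphism; composing this anti-homomorphism with the homomorphism $\pi$ gives $\gamma(g_1 g_2) = \iota(\pi(g_2)^{-1})\iota(\pi(g_1)^{-1}) = \gamma(g_2)\gamma(g_1)$, as required.

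It remains to verify the equivariance \eqref{item:equivariant}, which I expect to be the only place demanding a small computation. Put $h = \pi(y)$, so that $x^{\gamma(y)} = h x h^{-1}$. Because $\pi$ is a homomorphism fixing $H$ pointwise, $\pi(x^{\gamma(y)}) = h\, \pi(x)\, h^{-1}$, whence the left-hand side equals $\iota(h\, \pi(x)^{-1} h^{-1})$. On the other side, conjugating automorphisms by $\gamma(y) = \iota(h^{-1})$ and using again that $\iota$ is a homomorphism gives $\gamma(x)^{\gamma(y)} = \iota(h)\iota(\pi(x)^{-1})\iota(h^{-1}) = \iota(h\, \pi(x)^{-1} h^{-1})$ as well, so the two sides agree. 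Having established both \eqref{item:anti}~and \eqref{item:equivariant}, Theorem~\ref{eq:bi-skew-via-gamma}\eqref{item:no-GF} closes the argument and $\gamma$ is a bi-GF. The only pitfall worth flagging is bookkeeping of conventions — that $\iota(y)$ is conjugation $x \mapsto y^{-1}xy$, so that $\gamma(hk) = \iota(h^{-1})$ is conjugation by $h$, and that automorphisms act on the right — once these are fixed, every step is forced.
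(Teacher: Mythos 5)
Your proof is correct, and it takes a genuinely different route to the conclusion than the paper does. The paper never verifies the gamma functional equation for this $\gamma$ by direct computation: in its proof of the generalization (Proposition~\ref{prop:central}, which contains Proposition~\ref{prop:Childs} as the case $H \cap K = 1$), it first notes that $\gamma$ restricted to $H$ is an RGF, invokes the lifting result Proposition~\ref{prop:lifting} from~\cite{CCDC} to conclude that $\gamma$ is a GF on all of $G$, then checks the anti-homomorphism identity and closes via the equivalence Theorem~\ref{eq:bi-skew-via-gamma}\eqref{item:anti}. You bypass the lifting machinery entirely: by making the projection $\pi : G \to H$ explicit --- whose homomorphism property is exactly where $K \norm G$ enters, the same normality the paper uses when rewriting $h_{1} k_{1} h_{2} k_{2} = h_{1} h_{2} k_{1}^{h_{2}} k_{2}$ --- you verify both the anti-homomorphism identity and the equivariance identity \eqref{item:equivariant}, and then let Theorem~\ref{eq:bi-skew-via-gamma}\eqref{item:no-GF} supply the GFE for free. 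What your route buys is self-containedness: the only external input is Theorem~\ref{eq:bi-skew-via-gamma} itself, and all verifications reduce to the facts that $\iota$ is a homomorphism and $\pi$ is a homomorphism fixing $H$ pointwise. What the paper's route buys is generality: the appeal to Proposition~\ref{prop:lifting} is precisely what lets the same argument extend to central intersections $H \cap K \le Z(G)$ (Proposition~\ref{prop:central}) and to more general RGFs $\gamma'$ on $H$ (Proposition~\ref{prop:semi}), whereas your explicit projection argument is tailored to the split case with $\gamma' = \iota(h^{-1})$. Your bookkeeping of conventions ($x^{\iota(y)} = y^{-1} x y$, automorphisms acting on the right, $u^{v} = v^{-1} u v$) is consistent with the paper's throughout, so every step checks out.
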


We recall a result from~\cite[Proposition 2.14]{CCDC}.
\begin{proposition}
  \label{prop:lifting}
  Let $G$ be a group, and $H, K \le G$ such that $G = H K$.

  Let $\gamma': H \to \Aut(G)$ be a RGF such that
  \begin{enumerate}
  \item
    \label{item:cap}
    $\gamma'(H \cap K) \equiv 1$,
  \item
    \label{item:g-i}
    $K$ is invariant under $\{ \gamma'(h) \iota(h) : a \in H \}$.
  \end{enumerate}
  Then the map 
  \begin{equation*}
    \gamma(h k) = \gamma'(h), \text{ for $h \in H$, $k \in K$,}
  \end{equation*}
  is a well defined GF on $G$, and $\ker(\gamma) = \ker(\gamma') K$.
\end{proposition}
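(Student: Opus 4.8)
The plan is to verify, in order, three things: that the prescription $\gamma(hk) = \gamma'(h)$ does not depend on the chosen factorisation $g = hk$ with $h \in H$, $k \in K$; that the resulting map $\gamma : G \to \Aut(G)$ satisfies the gamma functional equation, so that, since it takes values in $\Aut(G)$ by construction, it is a GF on $G$ in the sense of Definition~\ref{def:GF}; and finally the kernel formula. Only the first two points require genuine work, as membership $\gamma(hk) = \gamma'(h) \in \Aut(G)$ is automatic.

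For well-definedness, suppose $h_1 k_1 = h_2 k_2$ with $h_i \in H$ and $k_i \in K$. Then $t := h_2^{-1} h_1 = k_2 k_1^{-1}$ lies in $H \cap K$, so that $\gamma'(t) = 1$ by the hypothesis $\gamma'(H \cap K) \equiv 1$. Since $\gamma'(t) = 1$ collapses both $h_2^{\gamma'(t)}$ to $h_2$ and the right-hand factor to the identity, the GFE for $\gamma'$ applied to the pair $(h_2, t)$ (both in $H$) reads $\gamma'(h_2 t) = \gamma'(h_2)\gamma'(t) = \gamma'(h_2)$, that is, $\gamma'(h_1) = \gamma'(h_2)$, as needed.

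The heart of the argument is the GFE for $\gamma$. Writing $g = h_1 k_1$ and $f = h_2 k_2$, so that $\gamma(f) = \gamma'(h_2)$, I would compute
\[
  g^{\gamma(f)} f = h_1^{\gamma'(h_2)} \, k_1^{\gamma'(h_2)} \, h_2 \, k_2
\]
and recast it into $H \cdot K$ form. The factor $h_1^{\gamma'(h_2)}$ lies in $H$ because $H$ is $\gamma'(H)$-invariant. To deal with $k_1^{\gamma'(h_2)} h_2$, I would pull $h_2$ to the left, obtaining $h_2 \, k_1^{\gamma'(h_2)\iota(h_2)}$, and here the invariance of $K$ under $\{\gamma'(h)\iota(h)\}$ is exactly what guarantees that $k_1' := k_1^{\gamma'(h_2)\iota(h_2)} \in K$. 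Reassembling yields $g^{\gamma(f)} f = (h_1^{\gamma'(h_2)} h_2)(k_1' k_2)$, with the first factor in $H$ and the second in $K$. Invoking well-definedness and then the GFE for $\gamma'$ on the pair $(h_1, h_2)$ gives
\[
  \gamma(g^{\gamma(f)} f) = \gamma'(h_1^{\gamma'(h_2)} h_2) = \gamma'(h_1)\gamma'(h_2) = \gamma(g)\gamma(f).
\]

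For the kernel, since $\gamma(hk) = \gamma'(h)$, an element $g = hk$ lies in $\ker(\gamma)$ precisely when $h \in \ker(\gamma')$, which yields $\ker(\gamma) = \ker(\gamma') K$ immediately. I expect the main obstacle to be the bookkeeping in the GFE step: one must confirm that the two hypotheses on $\gamma'$ — invariance of $H$ under $\gamma'(H)$ and invariance of $K$ under $\{\gamma'(h)\iota(h)\}$ — are precisely those needed to keep $g^{\gamma(f)} f$ inside $H \cdot K$ with $H$-component equal to $h_1^{\gamma'(h_2)} h_2$, so that the GFE for $\gamma'$ can be applied. Everything else is formal.
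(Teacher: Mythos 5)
Your proof is correct. Note that the paper itself gives no proof of this proposition---it is recalled verbatim from \cite[Proposition 2.14]{CCDC}---but your argument is precisely the natural one: well-definedness from hypothesis~(1) together with the GFE for $\gamma'$, the GFE for $\gamma$ obtained by recasting $g^{\gamma(f)}f$ into $H \cdot K$ form (where $\gamma'(H)$-invariance of $H$ and invariance of $K$ under $\{\gamma'(h)\iota(h)\}$ are used exactly where you use them), and the kernel equality following immediately from well-definedness.
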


\subsection{Allowing central intersections}

Our   first   result   is    a   straightforward   generalisation   of
Proposition~\ref{prop:Childs},  in  which  we  allow $H$  and  $K$  to
intersect centrally.
\begin{proposition}
  \label{prop:central}
  Let $(G, \cdot)$ be a group. Let $H, K \le G$ such that
  \begin{enumerate}
  \item $K \norm G$,
  \item $H K = G$, and
  \item $H \cap K \le Z(G)$.
  \end{enumerate}
  Then the map $\gamma : G \to \Aut(G)$
  given by, for $h \in H$ and $k \in K$,
  \begin{equation}
    \label{eq:a_lifting}
    \gamma(h k) = \iota(h^{-1})
  \end{equation}
  is well defined, is a GF on $G$, and defines a bi-skew brace $(G,
  \cdot, \circ)$.

  Moreover, the following are equivalent:
  \begin{enumerate}
  \item $\gammabar$ is also a bi-GF, and
  \item $H \norm G$.
  \end{enumerate}
\end{proposition}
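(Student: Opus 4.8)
The plan is to phrase the whole statement in terms of the regular subgroup $N \le \Hol(G)$ attached to $\gamma$, whose shape is especially simple in this case. First I would compute that for $\gamma(hk) = \iota(h^{-1})$ one has, for $x \in G$,
\begin{equation*}
  x^{\gamma(hk)\rho(hk)} = (h x h^{-1}) h k = h x k,
\end{equation*}
so that $N = \Set{ (x \mapsto h x k) : h \in H,\ k \in K }$ is the (commuting) product of the left translations by $H$ and the right translations by $K$. This description turns the normaliser conditions into bare-hands computations.

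Next I would reduce the claim to a normality statement about $N$. By Theorem~\ref{eq:bi-skew-via-gamma}, a GF is a bi-GF exactly when the associated regular subgroup is normalised by $\rho(G)$. Since $\gammabar$ is the GF of $N^{\inv}$, the assertion that $\gammabar$ is a bi-GF says that $N^{\inv}$ is normalised by $\rho(G)$; conjugating by the involution $\inv$, which normalises $\Hol(G)$ and satisfies $\rho(G)^{\inv} = \lambda(G)$, this is equivalent to $N$ being normalised by the left regular representation $\lambda(G)$. Thus I must show that $N$ is normalised by $\lambda(G)$ if and only if $H \norm G$.

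For the computation I would conjugate a generic $x \mapsto h x k$ in $N$ by $\lambda(g)$, obtaining $x \mapsto (g^{-1} h g)\, x\, k = h^{g} x k$. The forward implication is then immediate: if $H \norm G$ then $h^{g} \in H$, the conjugate lies in $N$, and $\lambda(G)$ normalises $N$; equivalently, $H, K \norm G$ give $[H,K] \le H \cap K \le Z(G)$, so no obstruction arises. For the converse I would compare the maps $x \mapsto h^{g} x k$ and $x \mapsto h' x k'$: they agree for all $x$ precisely when $h'^{-1} h^{g} = k' k^{-1}$ is a central element, and since it also lies in $K$ it lies in $K \cap Z(G)$; hence $h^{g} x k \in N$ forces $h^{g} \in H\,(K \cap Z(G))$.

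I expect the main obstacle to be exactly this converse. The normaliser condition controls $H^{g}$ only modulo the central subgroup $K \cap Z(G)$, that is, it yields $H^{g} \subseteq H\,(K \cap Z(G))$ for every $g$ (equivalently $[H,K] \le Z(G)$, via $h^{k} = h[h,k]$ together with $[h,k] \in K \cap Z(G)$). Promoting this to the full conclusion $H^{g} \subseteq H$ is the delicate point, and it is precisely here that one must extract extra leverage from the hypothesis $H \cap K \le Z(G)$ to absorb the central factor $K \cap Z(G)$ into $H$; I would isolate this absorption as a separate lemma, since the remaining containments and both halves of the equivalence $[H,K] \le Z(G) \Leftrightarrow H\,(K \cap Z(G)) \norm G$ are routine.
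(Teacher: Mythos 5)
Your route is genuinely different from the paper's: the paper gets the GF property of $\gamma$ from Proposition~\ref{prop:lifting}, verifies condition~\eqref{item:anti} of Theorem~\ref{eq:bi-skew-via-gamma} by a direct computation, and treats the ``moreover'' part via $\ker(\gammabar)$; you instead work with the explicit regular subgroup $N = \Set{x \mapsto h x k : h \in H,\ k \in K}$ and turn everything into normalisation statements. Everything you actually prove is correct: the description of $N$ (and, although you only sketch this, the first part of the Proposition is indeed routine in this language --- $N$ is closed under composition, regular because $HK = G$ and $H \cap K \le Z(G)$, and normalised by $\rho(G)$ because $K \norm G$), the equivalence of ``$\gammabar$ is a bi-GF'' with ``$\lambda(G)$ normalises $N$'', the implication from $H \norm G$, and the exact strength of the normaliser condition, namely $H^{g} \subseteq H(K \cap Z(G))$ for all $g \in G$, equivalently $[H,K] \le Z(G)$.

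However, the ``absorption lemma'' you are looking for does not exist: the implication from (1) to (2) is false as stated, so the gap you flag cannot be closed. Let $p$ be an odd prime, $G$ the Heisenberg group of order $p^{3}$ generated by $x, y$ with $z = [x,y]$ central of order $p$, and take $K = \Span{y, z}$, $H = \Span{x}$. Then $K \norm G$, $HK = G$, $H \cap K = 1 \le Z(G)$, and $[H,K] = \Span{z} = Z(G)$, so by your criterion $\gammabar$ is a bi-GF; concretely, $\gammabar(y^{b} z^{c} x^{a}) = \iota(y^{-b})$, so $\gammabar$ (which is automatically a GF, being associated to the regular subgroup $N^{\inv}$) is a homomorphism with abelian image, hence an anti-homomorphism by Lemma~\ref{prop:gammaG-is-abelian}, hence a bi-GF by Theorem~\ref{eq:bi-skew-via-gamma}. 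But $x^{y} = xz \notin H$, so $H \not\norm G$. What your computation actually establishes is the corrected equivalence: $\gammabar$ is a bi-GF if and only if $[H,K] \le Z(G)$, if and only if $H(K \cap Z(G)) \norm G$.

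The reason you could not see how to finish is that the paper's own proof of this direction breaks at exactly the point you isolated: it claims $\ker(\gammabar) = H$, but writing $g = kh$ with $k \in K$, $h \in H$ one finds $\gammabar(kh) = \iota(k^{-1})$ (use $\gammabar(y) = \gamma(y^{-1})\iota(y^{-1})$; the $\rho$ in the paper's displayed recollection of this formula is a typo for $\iota$), whence $\ker(\gammabar) = H(K \cap Z(G))$ --- precisely the subgroup your normaliser computation produces. So ``$\gammabar$ bi-GF $\Rightarrow \ker(\gammabar) \norm G$'' only yields $H(K \cap Z(G)) \norm G$, not $H \norm G$. The stated equivalence, and the paper's argument for it, are correct under the additional hypothesis $K \cap Z(G) \le H$; this holds in the motivating examples from~\cite{perfect}, where $H \cap K = Z(G)$, so the uses of the Proposition elsewhere in the paper are unaffected. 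Your instinct to isolate the absorption step as a separate lemma was exactly the right move: carrying it out is how one discovers that the lemma, and hence this direction of the Proposition, fails in general.
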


The bi-skew braces constructed in  this way comprise in particular the
bi-skew braces arising from the groups of~\cite{perfect}. Examples in
finite $p$-groups, like the following one, are easy to construct.
\begin{example}
  \label{ex:exexex}
  Let $p > 2$ be
  a prime, and let $K$ be an elementary abelian group of order $p^{n}$,
  for $p > n \ge 2$, with generators $a_{0}, \dots a_{n-1}$. There is a
  unique automorphism $\beta$ of $K$ such that $a_{i}
  \mapsto a_{i} a_{i+1}$ for $i = 0, \dots, n-1$ (where we take $a_{n} =
  1$), and $\beta$ has 
  order $p$. The group presented as
  \begin{equation*}
    G = \Span{ K, b : b^{p} = a_{n-1}, b^{-1} a b = a^{\beta} \text{ for
        $a \in K$}}
  \end{equation*}
  satisfies the hypotheses of Proposition~\ref{prop:central}, with $H =
  \Span{b}$, and does not split as a semidirect product over $K$, as $K$
  has index $p$ in $G$, and all
  elements outside $K$ have order $p^{2}$. It is easy to see that $(G,
  \circ)$ is abelian here.
\end{example}

\begin{proof}[Proof of Proposition~\ref{prop:central}]
  $\gamma$ is clearly well defined because the intersection of $H$ and
  $K$ lies in the centre.

  Now $\gamma$ restricted to $H$ is a RGF.
  Proposition~\ref{prop:lifting}
  shows that the extension~\eqref{eq:a_lifting} is a GF
  on $G$, which is readily seen to satisfy
  Theorem~\ref{eq:bi-skew-via-gamma}\eqref{item:anti}, as for $j_{1},
  h_{2} \in H$ and $k_{1}, k_{2} \in K$ we have
  \begin{align*}
    \gamma(h_{1} k_{1} h_{2} k_{2})
    &=
    \gamma(h_{1} h_{2} k_{1}^{h_{2}} k_{2})
    \\&=
    \iota(h_{1} h_{2})^{-1}
    \\&=
    \iota(h_{2})^{-1} \iota(h_{1})^{-1}
    \\&=
    \gamma( h_{2} k_{2}) \gamma(h_{1} k_{1}) .
  \end{align*}
  Therefore
  $\gamma$ is a bi-GF.

  If $\gammabar$ is a bi-GF, then $H = \ker(\gammabar) \norm
  G$. Conversely, if $H \norm G$, then we can exchange the roles of
  $H$ and $K$.
\end{proof}

\subsection{Pairs of compatible automorphisms of semidirect products}

With the second extension of Childs's result we are back to semidirect
products, where we allow further GF's on $H$.

We start by recalling a fact from~\cite[Theorem 1]{Curran} about
automorphisms of semidirect products. Let $G$ be a semidirect product of $K
\norm G$ by $H \le G$. An automorphism $d$ of $H$ can be
extended to an automorphism of $G$ that leaves $K$ invariant, acting
on $K$ as $a \in \Aut(K)$, if and only if the relation
\begin{equation}
  \label{eq:a-and-d}
  \iota(h)^{a} = \iota(h^{d}),
  \text{ for $h \in H$}
\end{equation}
holds, where
\begin{align*}
  \iota :\ &H \mapsto \Aut(K)\\
           &h \mapsto (k \mapsto k^{h}).
\end{align*}
Note that $d$ only determines $a$ up to an element of
$C_{\Aut(K)}(\iota(H))$.

We can  thus consider the  subset $\mathcal{P}$ of $\Aut(G)$  given by
all automorphisms of $G$ which leave $H$ and $K$ invariant, and act as
$d \in \Aut(H)$ on  $H$, and as $a \in \Aut(K)$ on  $K$, where $a$ and
$d$ are  related by~\eqref{eq:a-and-d}; we write  such an automorphism
as $d a$.

Now $\mathcal{P}$ is clearly a subgroup of
$\Aut(G)$, as if $d_{1} a_{1}, d_{2} a_{2} \in \mathcal{P}$, then for
their product  $d_{1} a_{1} d_{2} a_{2} = d_{1} d_{2} a_{1}
a_{2}$ \eqref{eq:a-and-d} yields, for $h \in H$,
\begin{equation*}
   \iota(h)^{a_{1} a_{2}} = \iota(h^{d_{1}})^{a_{2}} = \iota(h^{d_{1} d_{2}}),
\end{equation*}
so that $d_{1} d_{2} a_{1} a_{2} \in \mathcal{P}$.

We can now state
\begin{proposition}
  \label{prop:semi}
  Let $G$ be a semidirect product of $K \norm G$ by $H \le G$.

  Let $\gamma' : H \to \mathcal{P}$ be a RGF that satisfies
  \begin{equation*}
    \gamma'(h_{1} h_{2})
    =
    \gamma'(h_{2}) \gamma'(h_{1}).
    \text{ for $h_{1}, h_{2} \in H$}.
  \end{equation*}
  In particular, $\gamma'$  composed with the restriction to  $H$ is a
  bi-GF on $H$.

  Then the map $\gamma : G \to \Aut(G)$ defined by
  \begin{equation*}
    \gamma(h k) = \gamma'(h),
    \text{ for $h \in H$, $k \in K$}
  \end{equation*}
  is a bi-GF on $G$.
\end{proposition}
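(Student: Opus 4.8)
The plan is to verify that the map $\gamma(hk) = \gamma'(h)$ is a genuine bi-GF by checking the three ingredients characterised in Theorem~\ref{eq:bi-skew-via-gamma}, rather than establishing the skew brace axioms from scratch. First I would confirm that $\gamma$ is well defined: since $G$ is a \emph{semidirect} product of $K$ by $H$, each element of $G$ has a unique expression as $hk$ with $h \in H$, $k \in K$, so there is no ambiguity and no intersection condition to worry about (unlike in Proposition~\ref{prop:central}). The key structural observation is that the target of $\gamma'$ has been chosen to be the subgroup $\mathcal{P} \le \Aut(G)$, so that each value $\gamma'(h)$ already \emph{is} an automorphism of all of $G$ leaving $H$ and $K$ invariant; this is what makes the extension from $H$ to $G$ unproblematic, and it is precisely the role played by compatibility relation~\eqref{eq:a-and-d}.

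Next I would show $\gamma$ is a GF on $G$, i.e.\ that it satisfies the GFE. The natural route is to invoke Proposition~\ref{prop:lifting} with the given $H$, $K$ and $\gamma'$. For this I must check its two hypotheses: condition~\eqref{item:cap}, that $\gamma'(H \cap K) \equiv 1$, holds trivially here because $H \cap K = \{1\}$ in a semidirect product; and condition~\eqref{item:g-i}, that $K$ is invariant under $\{\gamma'(h)\iota(h) : h \in H\}$. The latter follows because each $\gamma'(h) \in \mathcal{P}$ leaves $K$ invariant by construction, and $\iota(h)$ (conjugation by $h \in H$) also leaves $K \norm G$ invariant. With both hypotheses met, Proposition~\ref{prop:lifting} delivers that $\gamma$ is a well-defined GF on $G$ with $\ker(\gamma) = \ker(\gamma')K$.

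Finally, to upgrade $\gamma$ from a GF to a \emph{bi}-GF, I would verify condition~\eqref{item:anti} of Theorem~\ref{eq:bi-skew-via-gamma}, namely that $\gamma$ is an anti-homomorphism. This is where the anti-homomorphism hypothesis on $\gamma'$ enters, and it is the step I expect to carry the real content. The computation mirrors the one in the proof of Proposition~\ref{prop:central}: writing a product $h_1 k_1 h_2 k_2 = h_1 h_2\, (k_1^{h_2} k_2)$ and reading off the $H$-component as $h_1 h_2$, one gets $\gamma(h_1 k_1 h_2 k_2) = \gamma'(h_1 h_2) = \gamma'(h_2)\gamma'(h_1) = \gamma(h_2 k_2)\gamma(h_1 k_1)$, using the assumed anti-homomorphism property of $\gamma'$. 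The one subtlety to watch—and the likeliest source of difficulty—is confirming that the normal-form rewriting $h_1 k_1 h_2 = h_1 h_2 k_1^{h_2}$ is legitimate, which relies on $K \norm G$ so that $k_1^{h_2} \in K$; this guarantees the $H$-part of the product is exactly $h_1 h_2$ and that $\gamma$ genuinely ignores the $K$-coordinate. Once~\eqref{item:anti} is established, Theorem~\ref{eq:bi-skew-via-gamma} shows $\gamma$ is a bi-GF, completing the proof.
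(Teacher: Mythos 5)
Your proposal is correct and follows essentially the same route as the paper: invoke Proposition~\ref{prop:lifting} to get that $\gamma$ is a GF (the paper leaves the hypothesis check implicit, which you spell out correctly), then verify the anti-homomorphism condition of Theorem~\ref{eq:bi-skew-via-gamma}\eqref{item:anti} via the identical normal-form computation $\gamma(h_1 k_1 h_2 k_2) = \gamma'(h_1 h_2) = \gamma'(h_2)\gamma'(h_1)$.
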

Proposition~\ref{prop:Childs} of Childs can be regarded as the special
case of this, where $\gamma'(h) = \iota(h^{-1})$ maps $H$ onto the subgroup
\begin{equation*}
  \Set{ x \mapsto h x h^{-1} : h \in H }
\end{equation*}
of $\mathcal{P}$.

In~\cite{CCDC} there are  examples  of the construction of this
Proposition,       related      to       the      ``lifting''       of
Proposition~\ref{prop:lifting}.

Example~\ref{ex:ex}  above is  of this
kind, where  $K$ is cyclic  of order $p^{2}$,  $H$ is cyclic  of order
$q$, where  $p$ and $q  \mid p  - 1$ are  primes, and $\gamma'(h)  : x
\mapsto h^{t} x h^{-t}$, for $h \in H$, for some $t \ne 1$. Here,
however, the component  on $H$ of the automorphism in $\mathcal{P}$ is trivial.

Another
instance occurs in~\cite[4.2.2]{CCDC}, which we describe in the
following
\begin{example}
  \label{ex:exex}
  Let $K = \Span{k}$  be a cyclic group of order $q$,  and $H = \Span{h}$
  be a cyclic group of order $p^{2}$,  where $p > 2$ and $q$ are primes,
  with $p \mid q  - 1$. Let $G$ be the semidirect  product of $K$ by
  $H$, where $H$ induces on $K$ a group of automorphisms of order $p$.

  Let $\psi \in \Aut(H)$ be the map $x \mapsto x^{1+p}$. Then for each
  $s, t$, it is immediate to see that $\iota(h)^{-s}_{\restriction K}
  \psi^{t}$ is  in $\mathcal{P}$, and that
  \begin{align*}
    \gamma' :\ &H \to \Aut(G)\\
               &h^{i} \mapsto \iota(h)^{-s i}_{\restriction K} \psi^{t i}
  \end{align*}
  is a RGF satisfying the hypothesis of the Proposition. It is shown
  in~\cite[4.2.2]{CCDC} that when $s \not\equiv 1 \pmod{p}$ we have
  $(G, \circ) \cong (G, \cdot)$, while for $s \equiv 1 \pmod{p}$, the
  group $(G, \circ)$ is cyclic.
\end{example}

\begin{proof}[Proof of Proposition~\ref{prop:semi}]
  Proposition~\ref{prop:lifting} yields that $\gamma$ is a GF on $G$.

  Now for $h_{1}, h_{2} \in H$ and $k_{1}, k_{2} \in K$ we have
  \begin{align*}
    \gamma( h_{1} k_{1} h_{2} k_{2})
    &=
    \gamma( h_{1} h_{2} k_{1}^{h_{2}} k_{2} )
    \\&=
    \gamma'( h_{1} h_{2} )
    \\&=
    \gamma'( h_{2} ) \gamma'(h_{1})
    \\&=
    \gamma( h_{2} k_{2} ) \gamma( h_{1} k_{1} ),
  \end{align*}
  so that $\gamma$ is a bi-GF on $G$.
\end{proof}

\subsection{Bi-skew braces from bi-homomorphisms into the centre}
\label{subs:bi-hom}

We now give  a construction that generalises the examples of
bi-skew braces coming from~\cite[Theorem~5.5]{p4}~and \cite{Tsang-p}.
\begin{prop}
  \label{thm:Delta}
  Let $G$ be a group,  and $K \le Z(G)$.

  Let 
  \begin{equation*}
    \Delta : G / K \times G / K  \to K
  \end{equation*}
  be a bi-homomorphism, that is, a function
  that is a homomorphism in each  of the two variables.

  Then the function
  $\gamma : G \to G^{G}$ given by
  \begin{equation}
    \label{eq:gamma-via-Delta}
    x^{\gamma(y)}
    =
    \Delta(x K, y K) x
  \end{equation}
  is    a     bi-GF,    which    satisfies    the     conditions    of
  Lemma~\ref{prop:gammaG-is-abelian}.
\end{prop}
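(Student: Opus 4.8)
The plan is to deduce the bi-GF property from Lemma~\ref{prop:gammaG-is-abelian} rather than attacking Theorem~\ref{eq:bi-skew-via-gamma}\eqref{item:anti} head-on. Concretely, I would verify that $\gamma$ is a GF on $G$, that $\gamma\colon(G,\cdot)\to\Aut(G)$ is a \emph{homomorphism} (condition~(1) of the Lemma), and that $\gamma(G)$ is \emph{abelian} (condition~(2)); the Lemma then hands me condition~(3), that $\gamma$ is also an anti-homomorphism, which by the remark following it is exactly the assertion that $\gamma$ is a bi-GF satisfying the conditions of the Lemma. The single observation driving every verification is that $K\le Z(G)$: the value $\Delta(xK,yK)$ is central, so it slides freely past any factor, and since $x^{\gamma(y)}=\Delta(xK,yK)\,x$ lies in the coset $xK$, the automorphism $\gamma(y)$ acts trivially modulo $K$ and, moreover, $\gamma(y)$ depends only on $yK$. (Well-definedness of $\gamma(y)$ as a function of $x$ is automatic, since $xK$ is determined by $x$.)

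First I would check that each $\gamma(y)$ is an automorphism. That $\gamma(y)$ is an endomorphism uses homomorphy of $\Delta$ in its \emph{first} argument together with centrality: expanding $(x_1x_2)^{\gamma(y)}=\Delta(x_1x_2K,yK)\,x_1x_2=\Delta(x_1K,yK)\Delta(x_2K,yK)\,x_1x_2$ and sliding the central factor $\Delta(x_2K,yK)$ back past $x_1$ recovers $x_1^{\gamma(y)}x_2^{\gamma(y)}$. Bijectivity is cleanest via the explicit inverse $z\mapsto\Delta(zK,yK)^{-1}z$, which is well defined because $\Delta(xK,yK)\in K$ forces $x^{\gamma(y)}K=xK$, so the coset fed into $\Delta$ is unchanged by $\gamma(y)$. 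A bijective endomorphism is an automorphism, so indeed $\gamma\colon G\to\Aut(G)$.

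Next I would establish that $\gamma$ is a homomorphism, now using homomorphy of $\Delta$ in its \emph{second} argument. In the paper's right-action convention, $x^{\gamma(y_1)\gamma(y_2)}=\bigl(\Delta(xK,y_1K)x\bigr)^{\gamma(y_2)}=\Delta(xK,y_2K)\Delta(xK,y_1K)\,x$, where I have again used $x^{\gamma(y_1)}K=xK$; second-variable homomorphy (after commuting the two central factors) collapses this to $\Delta(xK,y_1y_2K)\,x=x^{\gamma(y_1y_2)}$, giving $\gamma(y_1y_2)=\gamma(y_1)\gamma(y_2)$. The GFE is then automatic: since $x^{\gamma(y)}y\equiv xy\pmod K$ and $\gamma$ factors through $G/K$, one has $\gamma(x^{\gamma(y)}y)=\gamma(xy)=\gamma(x)\gamma(y)$, the rightmost equality being homomorphy; so $\gamma$ is a GF. Finally $\gamma(G)$ is abelian: the very same computation exhibits $\gamma(y_1)\gamma(y_2)$ and $\gamma(y_2)\gamma(y_1)$ as the maps $x\mapsto\Delta(xK,y_2K)\Delta(xK,y_1K)\,x$ and $x\mapsto\Delta(xK,y_1K)\Delta(xK,y_2K)\,x$, and these coincide because the two factors lie in the abelian group $K$.

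With conditions~(1) and~(2) of Lemma~\ref{prop:gammaG-is-abelian} in hand, condition~(3) follows, and the proof is complete. I do not expect a genuine obstacle: the entire content sits in the two one-variable homomorphy properties of $\Delta$ and in the centrality of $K$. The only points demanding real care are the direction of composition in $x^{\gamma(y_1)\gamma(y_2)}$ and the coset-invariance $x^{\gamma(y)}K=xK$, which is precisely what lets the GFE reduce to the homomorphism property instead of requiring a separate verification.
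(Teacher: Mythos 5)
Your proof is correct, and it rests on the same two pillars as the paper's: the centrality of $K$ (so that values of $\Delta$ slide past all factors, $x^{\gamma(y)} \equiv x \pmod{K}$, and $\gamma$ is constant on cosets of $K$) and Lemma~\ref{prop:gammaG-is-abelian}. The difference is one of orientation. The paper reads the key computation
\begin{equation*}
  x^{\gamma(y_1)\gamma(y_2)}
  =
  \Delta(xK, y_2K)\, \Delta(xK, y_1K)\, x
\end{equation*}
as saying that $\gamma$ is an \emph{anti-homomorphism} into $\End(G)$, obtains bijectivity of each $\gamma(y)$ by substituting $y_2 = y_1^{-1}$ (so that $\gamma(y_1^{-1})\gamma(y_1) = \gamma(1) = 1$), then notes that $\gamma(G)$ is abelian and concludes via the Lemma; you read the very same computation, after commuting the two central factors, as saying that $\gamma$ is a \emph{homomorphism}, obtain bijectivity from the explicit inverse $z \mapsto \Delta(zK, yK)^{-1} z$, and invoke the Lemma in the complementary direction, (1) and (2) imply (3). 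Both readings are legitimate precisely because the two factors lie in the abelian group $K$. One respect in which your write-up is tighter than the paper's: Lemma~\ref{prop:gammaG-is-abelian} takes as a hypothesis that $\gamma$ is a GF, and you verify the gamma functional equation explicitly, via $x^{\gamma(y)} y \equiv xy \pmod{K}$ together with the coset-invariance of $\gamma$, so that $\gamma(x^{\gamma(y)}y) = \gamma(xy) = \gamma(x)\gamma(y)$; the paper's proof never spells this step out, although it is needed before the Lemma (or Theorem~\ref{eq:bi-skew-via-gamma}\eqref{item:anti}) can be applied. So your argument is not a different method, but it is a cleaner assembly of the same ingredients.
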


It  is easy  to construct  examples of  this kind  in any  non-trivial
finite $p$-group $G$. Given any (non-trivial) subgroup $K \le Z(G)$,
any homomorphism
\begin{equation*}
  G / G' K \otimes G/ G' K \to K
\end{equation*}
will  yield a  $\Delta$ as  in the  Theorem; here  $G'$ is the derived
subgroup of $G$, and the tensor product is taken over $\Z$. In the
particular case when $K$ has exponent $p$, and $K \le \Frat(G)$,
where $\Frat(G)$ denotes the Frattini subgroup of $G$, we obtain
maps $\Delta$ from any linear map
\begin{equation*}
  G / \Frat(G) \otimes G /\Frat(G) \to K,
\end{equation*}
where $G / \Frat(G)$ and $K$ are regarded as vector spaces over the
field with $p$ elements

\begin{proof}[Proof of Proposition~\ref{thm:Delta}]
  For $x, y, z \in G$ we have, since $\Delta$ takes values in the centre
  of $G$,
  \begin{align*}
    (x y)^{\gamma(z)}
    &=
    \Delta( x y K, z K) x y
    \\&=
    \Delta( x K, z K) \Delta( y K, z K) x y
    \\&=
    \Delta( x K, z K) x \Delta( y K, z K) y
    \\&=
    x^{\gamma(z)} y^{\gamma(z)},
  \end{align*}
  that is, $\gamma(z) \in \End(G)$ for $z \in G$.

  Since $\Delta$ is a homomorphism in the second variable, we have for
  $x \in G$
  \begin{equation*}
    \Delta(x K, 1 K) \Delta(x K, 1 K)
    =
    \Delta(x K, 1 K),
  \end{equation*}
  so that $\Delta(x K, 1 K) = 1$. It follows that $\gamma(1)$ is the
  identity map on $G$.
  Note next that for $x, y, z \in G$ we have
  \begin{equation}
    \label{eq:xyz}
    \begin{aligned}
      z^{\gamma(y) \gamma(x)}
      &=
      \Delta(z^{\gamma(y)} K , x K) z^{\gamma(y)}
      \\&=
      \Delta(\Delta(z K, y K) z K, x K) \Delta(z K, y K) z
      \\&=
      \Delta(z K, x K) \Delta(z K, y K) z
      \\&=
      \Delta(z K, x y K) z
      \\&=
      z^{\gamma(x y)},
    \end{aligned}
  \end{equation}
  that   is,    $\gamma   :   (G,    \cdot)   \to   \End(G)$    is   a
  anti-homomorphism. Taking  $y = x^{-1}$  in~\eqref{eq:xyz}, we see
  that $\gamma(x) 
  \in \Aut(G)$ for $x \in G$.

  Since $\Delta$ takes values in the centre of $G$,
  we have
  \begin{equation*}
    \Delta(z K, x K) \Delta(z K, y K)
    =
    \Delta(z K, y K) \Delta(z K, x K),
  \end{equation*}
  in~\eqref{eq:xyz},   which  thus  also  yields  $\gamma(y)
  \gamma(x) = \gamma(x) \gamma(y)$, that is, $\gamma(G)$ is
  abelian. It follows from Lemma~\ref{prop:gammaG-is-abelian} that
  $\gamma$ is a bi-GF.
\end{proof}

\subsection{Completion of the proof of Corollary~\ref{cor:T}}
\label{subs:proof}

We deal first with Corollary~\ref{cor:T}\eqref{item-TG1-still}.
In~\cite[Proposition~7.9]{perfect}  a  group  $G   =  (G,  \cdot)$  is
constructed, which is  the central product of  two isomorphic, finite,
perfect groups $H, K$, where a  centre of order $3$ is amalgamated,
and it is shown that the  construction of
Proposition~\ref{prop:central} yields  a group  $(G, \circ)$  which is
not isomorphic to $G$. It follows that  $(G, \cdot, \circ)$ is a bi-skew brace
whose associated regular  subgroup $N \cong (G, \circ)$ is  not in the
image $\Hc(G)$ of $\mathcal{T}$.

For   this    group   $G$    ~\cite[Proposition~7.9]{perfect}   yields
$\Size{T(G)} = 2$. Now consider, with $G$ as above, the direct product
$S = G  \times L$, where $L$  is the direct product  of $n-1$ pairwise
not  isomorphic,  finite, perfect,  centreless  groups,  none of  them
isomorphic  to $H$.   Then  the results  of~\cite{perfect} yield  that
$\Size{T(S)}    =    2^{n}$.     However,    the    construction    of
Proposition~\ref{prop:central},   where  $S$   is   regarded  as   the
semidirect product with amalgamation of $K  L$ by $H$, shows that $S =
(S, \cdot)$  is the  additive group  of a  bi-skew brace,  $(S, \cdot,
\circ)$, whose associated regular subgroup $N \cong (S, \circ)$ is not
isomorphic  to $S$,  so  that $N$  is  not in  the  image $\Hc(S)$  of
$\mathcal{T}$.

For a different example, let $G$ be the free group of nilpotence class
$2$  and exponent  $3$ on  $n \ge  2$ generators.   Then~\cite[Theorem
  5.2]{p4} shows that $\Size{T(G)} = 2$.  However, the construction of
Proposition~\ref{thm:Delta}   (which  extends   to   this  situation   the
construction  of~\cite[Theorem~5.5]{p4})  shows   that  $G$  has  many
non-trivial bi-GF's.

For   Corollary~\ref{cor:T}\eqref{item-TG1-still},   an   example   is
provided  by  the  cyclic  group  $G$  of  order  $4$.   It  is  shown
in~\cite[Proposition  4.1(2)]{fgab}  that $T(G)  =  1$,  and that  the
holomorph of $G$,  which is isomorphic to the dihedral  group of order
$8$, contains a non-cyclic normal regular subgroup of order $4$.

\subsection{Nilpotent groups of class two as circle groups}

The construction of Proposition~\ref{thm:Delta} is  an analogue of the one
used in~\cite[Theorem 1]{AuWa} by J.~C.~Ault~and J.~F.~Watters to show
that certain groups of nilpotence class $2$ are the circle groups of a
(nilpotent) ring, and  in turn the circle groups of  a brace. (A brace
can be defined  as a skew brace with abelian  additive group, although
the concept of  a brace predates that of  a skew brace~\cite{braces}.)
It is  still an open  problem whether every  \emph{infinite} 
nilpotent
group  of  nilpotence  class  $2$  is the  circle  group  of  a  brace
(\cite[Problem 10.5]{Cedo-sur}, \cite[Problem 32]{Ven-sur}).

Proposition~\ref{thm:Delta} can be used to reformulate the results of Ault
and Watters in the context of bi-skew braces.  To give an example, let
$G = (G, \cdot)$ be a nilpotent group of nilpotence class $2$ which is
uniquely $2$-radicable, that  is, each element $g \in G$  has a unique
square root $g^{1/2}$. Let $K = Z(G)$. Then
\begin{align*}
  \Delta :\ &G/K \times G/K \to K\\
            &(x K, y K) \mapsto [x, y]^{-1/2}
\end{align*}
satisfies  the  hypotheses  of  Proposition~\ref{thm:Delta}.  (In practice
we are appealing to a
particular case of the  Baer correspondence~\cite{Baer-corr}, which is
in  turn  an  approximation  of  the  Lazard  correspondence  and  the
Baker-Campbell-Hausdorff formulas~\cite[Ch.~9~and 10]{khukhro}.)

For the
corresponding $\gamma$ and $\circ$ we have, for $x, y \in G$,
\begin{equation*}
  x \circ y
  =
  x^{\gamma(y)} y
  =
  \Delta(x K, y K) x y
  =
  [x, y]^{-1/2} x y,
\end{equation*}
and
\begin{equation*}
  y \circ x
  =
  y^{\gamma(y)} x
  =
  \Delta(y K, x K) y x
  =
  [y, x]^{-1/2} x y [y, x]
  =
  [x, y]^{-1/2} x y,
\end{equation*}
so that  $(G, \circ)$  is abelian.  (This would  also follow  from the
calculation of~\cite[Lemma 2.4]{p4}.)

Proposition~\ref{thm:Delta} yields that $(G, \cdot, \circ)$ is a bi-skew
brace, so that $(G, \circ, \cdot)$ is a brace with the original $(G,
\cdot)$ as the circle group.

\bibliographystyle{amsalpha}
 

\providecommand{\bysame}{\leavevmode\hbox to3em{\hrulefill}\thinspace}
\providecommand{\MR}{\relax\ifhmode\unskip\space\fi MR }
\providecommand{\MRhref}[2]{%
  \href{http://www.ams.org/mathscinet-getitem?mr=#1}{#2}
}
\providecommand{\href}[2]{#2}

\end{document}